\newtheorem{thm}{Theorem}
\newtheorem{cor}{Corollary}
\newtheorem{prop}{Proposition}
\newtheorem{rem}{Remark}
\newtheorem*{conj}{Conjecture}
\newtheorem*{defi}{Definition}
\newtheorem{lem}{Lemma}
\newtheorem*{prop2}{Proposition}
\begin{document}

    \title{The matrix equation $AX-XA=X^{\alpha}g(X)$\\ over fields or rings}
    \author{Gerald BOURGEOIS}
    
    \date{July-7-2014}
    \address{G\'erald Bourgeois, GAATI, Universit\'e de la polyn\'esie fran\c caise, BP 6570, 98702 FAA'A, Tahiti, Polyn\'esie Fran\c caise.}
    \email{bourgeois.gerald@gmail.com}
        
  \subjclass[2010]{Primary 15A24}
			\keywords{}

\begin{abstract} 
Let $n,\alpha\geq 2$. Let $K$ be an algebraically closed field with characteristic $0$ or greater than $n$. We show that the dimension of the variety of pairs $(A,B)\in {M_n(K)}^2$, with $B$ nilpotent, that satisfy $AB-BA=A^{\alpha}$ or $A^2-2AB+B^2=0$ is $n^2-1$ ; moreover such matrices $(A,B)$ are simultaneously triangularizable. Let $R$ be a reduced ring such that $n!$ is not a zero-divisor and $A$ be a generic matrix over $R$ ; we show that $X=0$ is the sole solution of $AX-XA=X^{\alpha}$. Let $R$ be a commutative ring with unity ; let $A$ be similar to $\mathrm{diag}(\lambda_1I_{n_1},\cdots,\lambda_rI_{n_r})$ such that, for every $i\not= j$, $\lambda_i-\lambda_j$ is not a zero-divisor. If $X$ is a nilpotent solution of $XA-AX=X^{\alpha}g(X)$ where $g$ is a polynomial, then $AX=XA$.
\end{abstract}

\maketitle
\section{Introduction}
$\bullet$ Let $n$ be an integer at least $2$. In the first part, $K$ is assumed to be a field such that its characteristic $\mathrm{char}(K)$ is $0$ or greater than $n$. Let $k$ be an integer at least $2$ and $A,B$ be two $n\times n$ matrices, with entries in $K$, satisfying the matrix equation
\begin{equation}  \label{casgen}  \sum_{j=0}^k(-1)^j\binom{k}{j}A^{k-j}B^j=0_n  \end{equation}
 In the following lines, we use the results of \cite{1}. Firstly, $A$ and $B$ have same spectrum $(\lambda_i)_i$ over $\overline{K}$, the algebraic closure of $K$ ; moreover, for every $i$, the generalized eigenspaces $E_{\lambda_i}(A)$ and $E_{\lambda_i}(B)$ are equal. Thus,  to study the solutions $(A,B)$ of Eq (\ref{casgen}) can be reduced to study the restrictions of $A,B$ to a generalized eigenspace $E_{\lambda}(A)=E_{\lambda}(B)$. Moreover, if $(A,B)$ is a solution of Eq (\ref{casgen}), then, for every $\mu\in K$, $(A-\mu I_n,B-\mu I_n)$ is also a solution of Eq (\ref{casgen}). Finally it suffices to solve Eq (\ref{casgen}) when $A,B$ are assumed to be nilpotent matrices.\\
 Note that $k=2$ is a very special case ; indeed Eq (\ref{casgen}) for $k=2$ is
\begin{equation} \label{carre} A^2-2AB+B^2=0_n \end{equation}
and is equivalent to 
\begin{equation}  \label{sim}  N^2=[N,B]\text{ where }N=A-B.  \end{equation}
 Thus Eq (\ref{carre}) is linked to the equation in the unknown $X$
\begin{equation} \label{two}  AX-XA=X^2. \end{equation}
Two matrices $A,B\in M_n(K)$ are said to be simultaneously triangularizable (abbreviated to $ST$) over $K$ if there exists $P\in GL_n(K)$ such that $P^{-1}AP$ and $P^{-1}BP$ are upper triangular matrices.\\
We show 
\begin{prop}  \label{nilp}
We assume that $\mathrm{char}(K)>n$ or is $0$. If $(A,B)$ is a solution of Eq (\ref{carre}), then $A$ and $B$ are $ST$ over $\overline{K}$.
\end{prop}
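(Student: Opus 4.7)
The plan is to reduce to the case where $A$ and $B$ are nilpotent, then to show that $N:=A-B$ is itself nilpotent by a trace argument exploiting Eq (\ref{sim}), and finally to exhibit $\ker N$ as a proper nonzero common invariant subspace and induct on $n$.

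\emph{Reduction.} The preamble of the section reduces the problem to the case where $A,B\in M_n(\overline{K})$ are both nilpotent, so in particular $\mathrm{tr}(A)=\mathrm{tr}(B)=0$. Under Eq (\ref{sim}), Eq (\ref{carre}) becomes $[N,B]=N^2$ with $N=A-B$.

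\emph{$N$ is nilpotent.} Applying the Leibniz identity $[N^k,B]=\sum_{i=0}^{k-1}N^{i}[N,B]N^{k-1-i}$ to $[N,B]=N^2$ yields
\[ [N^k,B]=k\,N^{k+1}\qquad (k\geq 1). \]
Taking traces, $k\,\mathrm{tr}(N^{k+1})=0$ for every $k\geq 1$. The characteristic hypothesis ensures that $1,2,\ldots,n$ are invertible in $K$, so $\mathrm{tr}(N^j)=0$ for $j=2,\ldots,n+1$; together with $\mathrm{tr}(N)=\mathrm{tr}(A)-\mathrm{tr}(B)=0$, this gives $\mathrm{tr}(N^j)=0$ for $j=1,\ldots,n$. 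Newton's identities, valid under the same characteristic assumption, then force the characteristic polynomial of $N$ to equal $X^n$, whence $N^n=0$.

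\emph{Invariant subspace and induction.} For $v\in\ker N$, the relation $NB=BN+N^2$ gives $NBv=0$, so $Bv\in\ker N$ and $Av=(N+B)v=Bv\in\ker N$; thus $\ker N$ is stable under both $A$ and $B$. If $N=0$ then $A=B$ and any triangularization of $A$ simultaneously triangularizes the pair. Otherwise $N$ is a nonzero nilpotent matrix, so $\ker N$ is a proper nonzero subspace of $\overline{K}^n$. Restriction to $\ker N$ and passage to $\overline{K}^n/\ker N$ both preserve the nilpotence of $A$ and $B$ as well as Eq (\ref{carre}), and strictly reduce the dimension. Induction on $n$ (trivial for $n=1$) then yields a basis of $\overline{K}^n$ in which $A$ and $B$ are simultaneously upper triangular.

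The main obstacle is the middle step, the nilpotence of $N$: this is where the hypothesis $\mathrm{char}(K)=0$ or $\mathrm{char}(K)>n$ is used twice in an essential way, first to deduce the vanishing of the power sums $\mathrm{tr}(N^j)$ from the identities $k\,\mathrm{tr}(N^{k+1})=0$, and then to invert Newton's identities. Once $N$ is known to be nilpotent, the invariant-subspace induction is routine.
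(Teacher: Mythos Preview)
Your proof is correct but follows a different path from the paper's. The paper simply observes that Eq~(\ref{sim}) reads $[N,B]=N^{2}$, a polynomial in $N$, and then invokes Theorem~\ref{lie} (proved via Lie's theorem on solvable Lie algebras) to conclude directly that $N,B$---hence $A=N+B$ and $B$---are $ST$; no preliminary reduction to the nilpotent case is needed. Your argument is instead self-contained and elementary: you first use the reduction to nilpotent $A,B$ from the introduction, then establish the nilpotence of $N$ by the trace identity $k\,\mathrm{tr}(N^{k+1})=0$ combined with Newton's identities, and finally run an invariant-subspace induction through $\ker N$. The paper's route is shorter and places the result inside the general principle ``$[U,V]=f(U)\Rightarrow U,V$ are $ST$'', while yours trades that structural machinery for a direct computation that makes the role of the characteristic hypothesis very explicit.
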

Note that the previous result is false with regard to the following (Eq (\ref{casgen}) when $k=3,n=4$)
\begin{equation} \label{equat3} A^3-3A^2B+3AB^2-B^3=0_n. \end{equation}
We consider the relation linking the $n\times n$ matrices $A,B$
\begin{equation} \label{relat}  AB-BA=A^{\alpha} \text{ where }\alpha\geq 2. \end{equation}
We show that the dimension of the algebraic variety of pairs $(A,B)\in M_n(\overline{K})$, with $B$ nilpotent, that satisfy Eq (\ref{carre}) or Eq (\ref{relat}) is $n^2-1$.\\
$\bullet$ In the second part, $R$ is assumed to be a commutative ring and we study the equation
\begin{equation} \label{equatLie}  AX-XA=X^{\alpha} \text{ where }\alpha\geq 2. \end{equation} 
\begin{defi} Let $R$ be a commutative ring with unity and $A=[a_{i,j}]$ be a $n\times n$ matrix where the $(a_{i,j})$ are commuting indeterminates. If $\tilde{R}$ is the ring of the polynomials in the indeterminates $(a_{i,j})$ and with coefficients in $R$, then the algbebra generated by $A$ is in $M_n(\tilde{R})$. In particular, there are no polynomial relations, with coefficients in $R$, linking the $(a_{i,j})_{i,j}$. We say that $A$ is a \emph{generic} matrix over $R$.
\end{defi}
When $R$ is reduced (for every $u\in R$, $u^2=0$ implies $u=0$), we obtain a precise result
\begin{prop}   \label{red}
Let $n\geq 2$, $R$ be a reduced ring such that $n!$ is not a zero-divisor. Let $A\in M_n(\tilde{R})$ be a generic matrix. Then $X=0$ is the sole solution of Eq (\ref{equatLie}).
\end{prop}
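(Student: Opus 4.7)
The strategy is to reduce to working over an algebraically closed field $L$ of characteristic $0$ or $>n$ in which $A$ has $n$ distinct eigenvalues, then use the trace trick to show $X$ is nilpotent, and finally exploit the derivation property of $\mathrm{ad}_A$ on powers of $X$ to shrink the nilpotency index to zero. For the reduction, $\tilde R$ is reduced and so embeds in $\prod_{\mathfrak p}\tilde R/\mathfrak p\tilde R$ over the minimal primes $\mathfrak p$ of $R$; in a reduced ring every element of a minimal prime is a zero-divisor, so the hypothesis that $n!$ is a non-zero-divisor forces $n!\notin\mathfrak p$ and makes $R/\mathfrak p$ a domain with $\mathrm{char}(R/\mathfrak p)\in\{0\}\cup\{p>n\}$. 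Replacing $R$ by each $R/\mathfrak p$ reduces to $R$ a domain, and passing to $L=\overline{\mathrm{Frac}(R)(a_{ij})}$ reduces the claim to $X=0$ in $M_n(L)$. The discriminant of $\chi_A$, viewed as a polynomial in the $a_{ij}$, specializes at $A=\mathrm{diag}(1,\ldots,n)$ to $\prod_{i<j}(j-i)^2$, a non-zero-divisor of $R$ (each factor is a positive integer at most $n-1$, hence divides $n!$); therefore $\mathrm{disc}(\chi_A)\ne 0$ in $L$ and $A$ has $n$ distinct eigenvalues, so we may diagonalize $A=\mathrm{diag}(\lambda_1,\ldots,\lambda_n)$.

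Next, multiplying $AX-XA=X^\alpha$ on the left by $X^k$ and using cyclicity of the trace gives $\mathrm{tr}(X^k[A,X])=0$, hence $\mathrm{tr}(X^{k+\alpha})=0$ for all $k\ge 0$. Writing the distinct nonzero eigenvalues of $X$ in $L$ as $\nu_1,\ldots,\nu_t$ with integer multiplicities $k_j\in\{1,\ldots,n\}$, the equations $\sum_j k_j\nu_j^m=0$ for $m=\alpha,\alpha+1,\ldots,\alpha+t-1$ form a Vandermonde system whose matrix factors as $\mathrm{diag}(\nu_j^\alpha)$ times a standard Vandermonde matrix, and is invertible because the $\nu_j$ are distinct and nonzero; hence $k_j=0$ in $L$, contradicting $1\le k_j\le n$ together with $\mathrm{char}(L)\in\{0\}\cup\{p>n\}$ unless $t=0$. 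Therefore $X$ has all eigenvalues zero, is nilpotent, and $X^n=0$.

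Finally, an easy induction on $k$ yields the derivation identity $[A,X^k]=k\,X^{k+\alpha-1}$. If $X^m=0$ with $m\ge\alpha$, then $[A,X^{m-\alpha+1}]=(m-\alpha+1)X^m=0$, so $X^{m-\alpha+1}$ commutes with $A$; since $A$ has $n$ distinct eigenvalues, its commutant consists of matrices diagonal in the eigenbasis of $A$, and a nilpotent diagonal matrix over $L$ is $0$, so $X^{m-\alpha+1}=0$. Iterating $m\mapsto m-\alpha+1$ (a strict decrease, as $\alpha\ge 2$) starting from $X^n=0$ leads to $X^m=0$ for some $1\le m\le\alpha-1$; then $X^\alpha=X^m X^{\alpha-m}=0$, so $[A,X]=0$, and one more application of the diagonal-nilpotent argument gives $X=0$.

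The main obstacle is the reduction step: propagating the hypothesis ``$n!$ not a zero-divisor'' through the quotients by minimal primes of $R$ (which is where reducedness is used essentially) and verifying that the generic $A$ genuinely has $n$ distinct eigenvalues over $L$. Once we are in $M_n(L)$ with $A$ diagonal of distinct spectrum, the trace/Vandermonde argument for nilpotency and the derivation-driven iterative reduction to $X=0$ are routine and use the characteristic hypothesis only through the fact that positive integers at most $n$ are nonzero in $L$.
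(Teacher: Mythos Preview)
Your proof is correct and follows the same overall architecture as the paper's: reduce to an algebraically closed field of characteristic $0$ or $>n$ via the structure of reduced rings, observe that the generic matrix $A$ has $n$ distinct eigenvalues there, show $X$ is nilpotent, and then run the descent $m\mapsto m-\alpha+1$ using $[A,X^k]=kX^{k+\alpha-1}$ together with the fact that a nilpotent matrix commuting with $A$ must vanish. Your reduction via minimal primes of $R$ and passage to $\overline{\mathrm{Frac}(R/\mathfrak p)(a_{ij})}$ is just an explicit unpacking of the paper's one-line appeal to ``$R$ is a subring of a product of fields''; the discriminant check by specialization to $\mathrm{diag}(1,\dots,n)$ is likewise what the paper means by ``the discriminant of $\chi_{A_i}$ is not $0$''.

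The one genuine divergence is in how nilpotency of $X$ is obtained. The paper invokes its Theorem~\ref{lie} (Lie's theorem for the solvable Lie algebra spanned by $B,I,A,\dots,A^{n-1}$) to conclude that $A$ and $X$ are simultaneously triangularizable over $\overline K$, whence $X^\alpha=[A,X]$ is strictly upper triangular and $X$ is nilpotent. You instead use the trace identity $\mathrm{tr}(X^{k+\alpha})=\mathrm{tr}(X^k[A,X])=0$ and a Vandermonde argument on the nonzero eigenvalues of $X$, exploiting $\mathrm{char}(L)>n$ only through the nonvanishing of the multiplicities $k_j\in\{1,\dots,n\}$. Your route is more elementary and fully self-contained (no Lie theory needed); the paper's route is shorter once Theorem~\ref{lie} is in hand and yields the extra structural information that $A,X$ are $ST$, which you do not need here. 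The final descent arguments are essentially identical: the paper phrases it as a single contradiction on the nilindex using property~$\mathcal P$, while you iterate explicitly down to an exponent $\le\alpha-1$, but the content is the same.
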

Else, we only obtain a partial result
\begin{prop}   \label{triang}
	Let $R$ be a commutative ring with unity, $\alpha\geq 2$ and $g$ be a polynomial with coefficients in $R$ such that $g(0)\not=0$.	Let $X\in M_n(R)$ be a nilpotent solution of the equation
		\begin{equation}  \label{equaring}   XA-AX=X^{\alpha}g(X). \end{equation}	
		Then all elements of the two-sided ideal, in $M_n(R)$, generated by $AX-XA$ are nilpotent. 
		\end{prop}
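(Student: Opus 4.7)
The plan is to exploit that $Y := AX - XA = -X^{\alpha}g(X)$ lies in the commutative subring $R[X] \subset M_n(R)$, so $Y$ commutes with $X$ and is itself nilpotent: if $X^m = 0$, then $Y^k = X^{\alpha k}g(X)^k$ vanishes as soon as $\alpha k \ge m$, so $Y^N = 0$ for any integer $N \ge m/\alpha$.

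The core step is a \emph{sliding identity} that lets me push $Y$ to the left past arbitrary products of $A$ and $X$. Using the Leibniz rule for the inner derivation $\mathrm{ad}_A$ together with $XY = YX$, one computes $[A,X^k] = kX^{k-1}Y$, and applying this to the polynomial $p(t) = t^{\alpha}g(t)$ gives $[A, p(X)] = p'(X)\,Y$. Since $Y = -p(X)$, this rearranges to
\[
 AY = Y\bigl(A - p'(X)\bigr), \qquad p'(X) = \alpha X^{\alpha-1}g(X) + X^{\alpha}g'(X) \in R[X].
\]
Combined with $XY = YX$, a straightforward induction on the length of noncommutative monomials in $A$ and $X$ then yields: for every $M$ in the subring $\mathcal{S} := R\langle A, X\rangle \subset M_n(R)$ generated by $A$ and $X$, there exists $M' \in \mathcal{S}$ with $MY = YM'$.

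From this sliding, every element of the two-sided ideal generated by $Y$ has the form $Yh$ for some $h \in \mathcal{S}$, since a typical generator $pYq$ can be rewritten as $Yp'q$ by moving $Y$ past $p$. Iterating the sliding once more yields $(Yh)^k = Y^k\,h_k$ for some $h_k \in \mathcal{S}$ (one moves each successive $h$ past the $Y$ to its right, accumulating factors of $Y$ on the left), and this vanishes as soon as $\alpha k \ge m$ by the first observation. Hence every element of the ideal is nilpotent.

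The main obstacle is the bookkeeping in the sliding induction: one has to verify that each correction $-p'(X)$ picked up whenever $Y$ crosses an $A$ stays inside $\mathcal{S}$, and that the induction on word length really closes. Both points are automatic once one notices that $p'(X)\in R[X] \subset \mathcal{S}$ and that the sliding is compatible with concatenation of words ($w_1 w_2 \cdot Y = w_1 \cdot Y w_2' = Y w_1' w_2'$). After this combinatorial step, the nilpotency conclusion is immediate from the fact that $Y$ carries at least $\alpha$ copies of the nilpotent matrix $X$.
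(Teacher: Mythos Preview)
Your argument is correct and somewhat slicker than the paper's. Both proofs exploit the same underlying structure of the subalgebra $\mathcal{S}=R\langle A,X\rangle$, namely that $[A,X]$ lies in $R[X]$, but they package it differently. The paper works through three preparatory lemmas establishing a normal form: every element of $\mathcal{S}$ can be written as $\sum_i A^i v_i(X)$, and the minimum $X$-valuation of the $v_i$ is tracked through products, so that $(P\,[A,X]\,Q)^k=\sum_i A^i w_i$ with $\mathrm{val}(w_i)\ge k\alpha$; nilpotency then follows once $k\alpha$ exceeds the nilpotency index of $X$. You bypass the normal-form bookkeeping entirely by isolating the single intertwining relation $AY=Y(A-p'(X))$, which immediately gives $MY=YM'$ for all $M\in\mathcal{S}$; this shows the two-sided ideal equals $Y\mathcal{S}$ and that $(Yh)^k=Y^k h_k$, whence nilpotency follows directly from $Y^k=\pm X^{\alpha k}g(X)^k$. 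Your route is more conceptual (it is essentially the observation that $\mathcal{S}$ is an Ore-type extension of $R[X]$ by $A$), while the paper's valuation tracking gives slightly finer quantitative control on the exponents at each stage. One small remark: both your proof and the paper's really concern the two-sided ideal in $\mathcal{S}$ rather than in all of $M_n(R)$ (over a field the latter would be everything whenever $[A,X]\neq 0$); this is consistent with the McCoy-type criterion referenced in the paper's remark following the proposition.
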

		If $A$ is diagonalizable and its spectrum is ``good'', then we obtain a complete solution
		\begin{thm}  \label{main}  Let $A\in M_n(R)$ be similar to $\mathrm{diag}(\lambda_1,\cdots,\lambda_n)$ such that, for every $i\not= j$, $\lambda_i-\lambda_j$ is not a zero-divisor. If $n!$ is not a zero-divisor and $X\in M_n(R)$ is a solution of Eq (\ref{equatLie}), then there is $P\in GL_n(R)$ such that $A=P\mathrm{diag}(\lambda_1,\cdots,\lambda_n)P^{-1}$ and $X=P\mathrm{diag}(\mu_1,\cdots,\mu_n)P^{-1}$ where, for every $i$, ${\mu_i}^{\alpha}=0$.				
				\end{thm}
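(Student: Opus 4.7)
The plan is to reduce, via a similarity, to the case $A = \mathrm{diag}(\lambda_1, \ldots, \lambda_n)$ (conjugating by the matrix $P_0$ that diagonalizes $A$ and replacing $X$ by $P_0^{-1} X P_0$), and then to show that in this basis $X$ is already diagonal; once that is done, $AX - XA = 0 = X^{\alpha}$ yields the theorem with $P = P_0$ and $\mu_i = x_{ii}$ automatically satisfying $\mu_i^{\alpha} = 0$.

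The centerpiece of the argument is the iterated commutator identity
\[
\mathrm{ad}_A^k(X) = c_k\, X^{k(\alpha-1)+1}, \qquad c_k := \prod_{j=0}^{k-1}\bigl(j(\alpha-1)+1\bigr),
\]
which I would prove by induction on $k$ using the fact that $X$ commutes with $X^{\alpha} = \mathrm{ad}_A(X)$, so the derivation $\mathrm{ad}_A$ satisfies $\mathrm{ad}_A(X^{\ell}) = \ell X^{\ell-1}\mathrm{ad}_A(X) = \ell X^{\ell+\alpha - 1}$. Reading off entry $(i,j)$ in the diagonal basis for $A$ gives
\[
(\lambda_i - \lambda_j)^k x_{ij} = c_k\, (X^{k(\alpha-1)+1})_{ij}.
\]
Thus, if some matrix power $X^N$ vanishes, any $k$ with $k(\alpha-1)+1 \geq N$ forces the right-hand side to be zero, and the hypothesis that $\lambda_i - \lambda_j$ (hence its $k$-th power) is a non-zero-divisor immediately yields $x_{ij} = 0$ for $i \neq j$, i.e.\ $X$ is diagonal.

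To produce such an $N$, I would use the cyclic-trace identity
\[
\mathrm{tr}\bigl(P(X)\,X^{\alpha}\bigr)=\mathrm{tr}\bigl(P(X)[A,X]\bigr)=\mathrm{tr}\bigl(A\,[P(X),X]\bigr)=0
\]
valid for any polynomial $P$, the last equality because $X$ commutes with $P(X)$. Setting $P(X) = X^{(k-1)\alpha}$ shows $\mathrm{tr}((X^{\alpha})^k) = 0$ for every $k \geq 1$. Newton's formulas, applicable because $n!$ (and thus each of $1,2,\ldots,n$) is a non-zero-divisor, then force every elementary symmetric function of the eigenvalues of $X^{\alpha}$ to vanish, so $\chi_{X^{\alpha}}(t)=t^n$ and Cayley--Hamilton gives $(X^{\alpha})^n = 0$. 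Thus $N = \alpha n$ suffices.

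I expect this last step --- upgrading the family of trace vanishings $\mathrm{tr}((X^{\alpha})^k) = 0$ to the matrix-level nilpotency $(X^{\alpha})^n = 0$ --- to be the most delicate one. It is precisely where the hypothesis that $n!$ is not a zero-divisor enters in an essential way: Newton's identities only produce $k E_k = 0$ for the elementary symmetric functions, and one genuinely needs each such $k \leq n$ to be a non-zero-divisor in order to conclude $E_k = 0$ and feed the result into Cayley--Hamilton. Everything else is formal manipulation of commutators and traces.
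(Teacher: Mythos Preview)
Your proposal is correct and takes a genuinely different route from the paper's proof.

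\emph{Nilpotency of $X$.} The paper invokes Proposition~\ref{domain}, which in turn rests on the Jacobson lemma: from the fact that $[A,X]=X^{\alpha}$ commutes with $X$ one gets $n!\,[A,X]^{2^{n}-1}=0$, hence $X^{\alpha(2^{n}-1)}=0$. Your trace argument---$\mathrm{tr}(X^{k\alpha})=\mathrm{tr}(X^{(k-1)\alpha}[A,X])=0$ by cyclicity, then Newton's identities with the $n!$ hypothesis---is more elementary and actually yields the sharper bound $X^{n\alpha}=0$. (A trivial sign slip: the displayed chain should end with $\mathrm{tr}(A[X,P(X)])$; since $[X,P(X)]=0$ it does not matter.)

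\emph{From nilpotency to $AX=XA$.} The paper runs a descent: starting from $[A,X^{i}]=iX^{i+\alpha-1}=0$ (where $i+\alpha-1$ is the nilpotency index), it repeatedly applies Proposition~\ref{diago}---the lemma that for diagonalisable $A$ with non-zero-divisor eigenvalue gaps, $[A,[A,B]]=0$ forces $[A,B]=0$---to lower the exponent by $\alpha-1$ at each step until $X$ itself is reached. You instead compute the closed form $\mathrm{ad}_{A}^{k}(X)=c_{k}\,X^{k(\alpha-1)+1}$, read off $(\lambda_{i}-\lambda_{j})^{k}x_{ij}=c_{k}\,(X^{k(\alpha-1)+1})_{ij}$ entrywise in the diagonal basis, and take $k$ large enough to kill the right-hand side.

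Both routes use exactly the same hypotheses in the same roles. Your argument is self-contained and avoids the auxiliary Proposition~\ref{diago}; the paper's descent isolates that proposition as a reusable tool which also drives the more general Theorem~\ref{princ}.
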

				Finally, we show our main result
					\begin{thm} \label{princ}
Let $A\in M_n(R)$ be similar over $R$ to $\mathrm{diag}(\lambda_1I_{n_1},\cdots,\lambda_rI_{n_r})$ with $n_1+\cdots +n_r=n$ and such that, for every $i\not= j$, $\lambda_i-\lambda_j$ is not a zero-divisor. Let $X$ be a nilpotent solution of Eq (\ref{equaring}). \\
$i)$ Then there is $P\in GL_n(R)$ such that 
$$A=P\;\mathrm{diag}(\lambda_1I_{n_1},\cdots,\lambda_rI_{n_r})\;P^{-1}\text{ and }X=P\;\mathrm{diag}(X_1,\cdots,X_r)\;P^{-1}$$
 where, for every $i$, $X_i\in M_{n_i}(R)$ and ${X_i}^{\alpha}g(X_i)=0$.\\
 $ii)$ If moreover $g(0)$ is a unit, then $X^{\alpha}=0$.
\end{thm}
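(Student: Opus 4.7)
The plan is to show that once $A$ is brought to $\mathrm{diag}(\lambda_1 I_{n_1},\ldots,\lambda_r I_{n_r})$, the nilpotent matrix $X$ is forced to be block-diagonal. Part i) then follows because any $P$ conjugating the diagonal $A$ to itself must commute with $A$ and hence, by the non-zero-divisor hypothesis on the $\lambda_i-\lambda_j$, be block-diagonal; part ii) will follow once one observes that $g(X_i)$ is invertible whenever $g(0)$ is. One may further assume $g(0)\neq 0$ by factoring $g(x)=x^s\widetilde g(x)$ with $\widetilde g(0)\neq 0$ and absorbing $X^s$ into $X^\alpha$ (if $g=0$ there is nothing to prove, since then $X$ already commutes with $A$).

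Apply Proposition \ref{triang}: the two-sided ideal of $M_n(R)$ generated by $AX-XA$ is nil, which is equivalent to the ideal $I\subseteq R$ generated by the entries of $AX-XA$ being nil. Since $(AX-XA)_{ij}=(\lambda_i-\lambda_j)X_{ij}$ and $\lambda_i-\lambda_j$ is a non-zero-divisor, each off-diagonal entry $(X_{ij})_{ab}$ is nilpotent. Let $J\subseteq R$ denote the finitely generated ideal these entries generate: as a finitely generated nil ideal in a commutative ring, $J$ is nilpotent, $J^T=0$ for some $T$. Localize at the multiplicative set $S$ generated by the $\lambda_i-\lambda_j$; these being non-zero-divisors, $R\hookrightarrow S^{-1}R$, and in the localization they become units. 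Set $\widetilde J=J\cdot S^{-1}R$, still nilpotent.

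The crux is a doubling induction showing that, in $S^{-1}R$, each $X_{ij}$ ($i\neq j$) has entries in $\widetilde J^{2^n}$ for every $n$. Write $X=D+N$ with $D$ block-diagonal and $N$ off-diagonal: from $X^p=0$ together with $X^p\equiv D^p\pmod{M_n(J)}$ each $D_i:=X_{ii}$ is nilpotent. Setting $f(x)=x^\alpha g(x)$ and expanding $f(D+N)$ by the number of $N$-factors, the ``no-$N$'' part is block-diagonal, the linear-in-$N$ part contributes on block $(i,j)$ the term $\Phi_{ij}(X_{ij})$ with
\begin{equation*}
\Phi_{ij}(Y):=\sum_{d\geq\alpha}f_d\sum_{l=0}^{d-1}D_i^l\,Y\,D_j^{d-1-l},
\end{equation*}
and the ``order $\geq 2$ in $N$'' part has entries in $\widetilde J^{2k}$ whenever entries of $N$ lie in $\widetilde J^k$. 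Since $\Phi_{ij}$ is a polynomial without constant term in the commuting nilpotent operators $L_{D_i}$ (left-multiplication by $D_i$) and $R_{D_j}$ (right-multiplication by $D_j$) on $M_{n_i,n_j}(S^{-1}R)$, it is itself nilpotent. With $\lambda_j-\lambda_i$ a unit in $S^{-1}R$, the operator $(\lambda_j-\lambda_i)\operatorname{Id}-\Phi_{ij}$ is therefore invertible by a finite Neumann series, and the identity
\begin{equation*}
\bigl[(\lambda_j-\lambda_i)\operatorname{Id}-\Phi_{ij}\bigr](X_{ij})=(\text{order }\geq 2\text{ in }N)_{ij}
\end{equation*}
upgrades, by inversion, entries of $X_{ij}$ from $\widetilde J^k$ to $\widetilde J^{2k}$. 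Starting from the base $k=1$ and iterating until $2^n\geq T$ yields $X_{ij}=0$ in $S^{-1}R$, and hence in $R$.

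Thus $X$ is block-diagonal, and the $(i,i)$ block of (\ref{equaring}) reads $X_i^\alpha g(X_i)=0$, giving i). For ii), each $X_i$ is nilpotent as a block of the nilpotent $X$, so $g(X_i)=g(0)I+(\text{nilpotent})$ is a unit whenever $g(0)$ is, forcing $X_i^\alpha=0$ and hence $X^\alpha=0$. The main obstacle lies in the third paragraph: identifying the linear-in-$N$ contribution as a nilpotent Sylvester-type operator, verifying that its shift by $\lambda_j-\lambda_i$ becomes invertible after localization, and arranging the doubling induction so that the nilpotency of $J$ is eventually exhausted.
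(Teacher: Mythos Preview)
Your argument is correct and reaches the same conclusion as the paper --- that $X$ is block-diagonal in the basis diagonalizing $A$ --- but by a genuinely different route. The paper's proof is essentially a one-line citation: it observes that $\ker((A-\lambda_i I)^k)$ is exactly the $i$-th block of $R^n$ (using that the $\lambda_i-\lambda_j$ are non-zero-divisors) and then appeals to \cite[Lemmas~1--2, Theorem~3]{5} to conclude that $X$ preserves each such kernel, hence is block-diagonal; part~ii) is handled exactly as you do. Your proof instead stays internal to the present paper: you invoke Proposition~\ref{triang} to get that the ideal $J\subset R$ generated by the off-block entries of $X$ is nilpotent, localize to make the $\lambda_i-\lambda_j$ units, and then run a Sylvester-operator doubling argument (unit minus nilpotent is invertible, so off-diagonal blocks sit in ever-higher powers of $J$). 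This is more work, but it is fully self-contained and makes no use of the external reference~\cite{5}; in particular it is transparent that everything goes through over an arbitrary commutative ring with unity, whereas the paper's citation leaves the reader to check that the cited arguments (originally stated in a different setting) carry over. Two minor remarks: your opening sentence about ``any $P$ conjugating the diagonal $A$ to itself'' is not needed for the existence statement in~i) --- a single $P$ from the hypothesis suffices --- and your reduction ``factor $g(x)=x^s\widetilde g(x)$'' is harmless but superfluous, since the standing hypothesis on Eq~(\ref{equaring}) already includes $g(0)\neq 0$.
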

\section{Equations (2), (5), (7) over a field} 
In this section , $K$ is a field with characteristic not $2$.
Let $J_n$ denote the nilpotent Jordan-block of dimension $n$. If $A$ is a square matrix, then $\chi_A$ denotes its characteristic polynomial.\\
The following result is well-known (see \cite{1})
\begin{prop2}
If $n=2$ and $(A,B)$ is a solution of Eq (\ref{carre}), then $AB=BA$.
\end{prop2}
\begin{cor} \label{mult2}
Let $(A,B)\in {M_n(K)}^2$ be a solution of Eq (\ref{carre}). If the multiplicity of each eigenvalue of $A$ is at most $2$, then $AB=BA$.
\end{cor}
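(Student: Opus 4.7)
The plan is to apply the reduction to generalized eigenspaces described in the introduction and then invoke the preceding proposition on each block of size at most two.

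First I would note that, by the results from \cite{1} recalled at the start of the paper, any solution $(A,B)$ of Eq (\ref{carre}) has the property that $A$ and $B$ share the same spectrum $(\lambda_i)_i$ over $\overline{K}$ and that $E_{\lambda_i}(A)=E_{\lambda_i}(B)$ for every $i$. Consequently, $\overline{K}^n$ splits as the direct sum $\bigoplus_i E_{\lambda_i}$, and this decomposition simultaneously reduces both $A$ and $B$. It therefore suffices to prove commutativity of $A$ and $B$ on each common generalized eigenspace $E_{\lambda_i}$. Note that whether or not $AB=BA$ is insensitive to extension of scalars, so passing to $\overline{K}$ is harmless.

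Next I would use the fact, also recalled in the introduction, that if $(A,B)$ solves Eq (\ref{carre}) then so does $(A-\lambda_i I, B-\lambda_i I)$. Restricting to $E_{\lambda_i}$, we can thus replace $A|_{E_{\lambda_i}}$ and $B|_{E_{\lambda_i}}$ by their nilpotent shifts $A_i$ and $B_i$, which still satisfy Eq (\ref{carre}) on a space of dimension $m_i=\dim E_{\lambda_i}$, and $A$ commutes with $B$ on $E_{\lambda_i}$ if and only if $A_i$ commutes with $B_i$.

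The hypothesis of the corollary says that every algebraic multiplicity of $A$ is at most $2$, so $m_i\in\{1,2\}$ for all $i$. If $m_i=1$, then $A_i=B_i=0$ trivially commute. If $m_i=2$, then $(A_i,B_i)$ is a pair of $2\times 2$ matrices satisfying Eq (\ref{carre}), and the preceding proposition (the $n=2$ case from \cite{1}) yields $A_iB_i=B_iA_i$. Assembling these commutation relations block-by-block gives $AB=BA$ on all of $\overline{K}^n$, hence over $K$.

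There is no real obstacle here: the proof is essentially a two-line application of the $n=2$ result once the generalized-eigenspace reduction from the introduction is invoked. The only point requiring a word of care is the passage to $\overline{K}$, but this is justified because the identity $AB=BA$ descends from $\overline{K}$ to $K$.
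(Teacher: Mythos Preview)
Your argument is correct and follows the same route as the paper: reduce via the common generalized eigenspaces from \cite{1} to nilpotent blocks of dimension at most $2$, then invoke the $n=2$ proposition. The paper's proof is simply a terser version of what you wrote, omitting the trivial $m_i=1$ case and the remark on descent from $\overline{K}$.
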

\begin{proof}
According to \cite{1}, we may assume that $A,B$ are nilpotent matrices of dimension $2$ and we conclude using the previous proposition.
\end{proof}
The following result is a slight improvement of \cite[Theorem 1]{5} or of \cite[Theorem 11']{17}. 
\begin{thm}  \label{lie}
We assume that $\mathrm{char}(K)>n$ or is $0$. Let $A,B\in M_n(K)$ be such that $[A,B]=f(A)$ where $f$ is a polynomial. Then $A,B$ are $ST$ over $\overline{K}$.
\end{thm}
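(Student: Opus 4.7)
The plan is to proceed by induction on $n$, the case $n = 1$ being trivial. Working over $\overline{K}$, it suffices to produce a common eigenvector $v_0$ of $A$ and $B$: passing to $\overline{K}^n/\langle v_0 \rangle$ yields an $(n-1)$-dimensional pair still satisfying a relation of the form $[\bar{A}, \bar{B}] = f(\bar{A})$ (the identity descends because it is polynomial in $A, B$), and since $\mathrm{char}(K) > n$ (or $0$) implies $\mathrm{char}(K) > n-1$, the inductive hypothesis applies.

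The first key step is to show that $f(A)$ is nilpotent. Note that $f(A)$ commutes with $A$ since it is a polynomial in $A$. For every $k \geq 1$, using $[A, B] = f(A)$ and the cyclicity of the trace,
$$\mathrm{tr}(f(A)^k) = \mathrm{tr}(f(A)^{k-1}[A, B]) = \mathrm{tr}(f(A)^{k-1}AB) - \mathrm{tr}(f(A)^{k-1}BA),$$
and the two trace terms on the right coincide, by cyclicity together with $A f(A)^{k-1} = f(A)^{k-1}A$. Hence $\mathrm{tr}(f(A)^k) = 0$ for all $k \geq 1$. Under the characteristic assumption, the Newton identities then force the characteristic polynomial of $f(A)$ to equal $X^n$, so $f(A)$ is nilpotent.

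The second step uses nilpotency to pin down a common eigenvector. Every eigenvalue of $f(A)$ has the form $f(\lambda)$ for some eigenvalue $\lambda$ of $A$ in $\overline{K}$; nilpotency thus forces $f(\lambda) = 0$ for each such $\lambda$. Fix one eigenvalue $\lambda$ and set $W = \ker(A - \lambda I)$. For $v \in W$,
$$(A - \lambda I) Bv = B(A - \lambda I) v + [A, B] v = f(A) v = f(\lambda) v = 0,$$
so $W$ is $B$-stable. Any eigenvector $v_0$ of $B|_W$ is then a common eigenvector of $A$ and $B$, and the induction concludes.

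The main obstacle is precisely the first step: the passage from the vanishing of the power-sum traces $\mathrm{tr}(f(A)^k)$ to the nilpotency of $f(A)$. This is where the characteristic hypothesis is essential, since the Newton identities (which recover the elementary symmetric functions of the eigenvalues of $f(A)$ from its power sums only after division by integers up to $n$) would fail if $\mathrm{char}(K)$ divided some integer in $\{1, \ldots, n\}$; everything else in the argument is characteristic-free.
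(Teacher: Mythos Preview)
Your proof is correct and takes a genuinely different route from the paper's. The paper does not argue by induction on $n$; instead it observes that the vector space $V$ spanned by $\{B, I_n, A, \ldots, A^{n-1}\}$ is a Lie subalgebra of $M_n(K)$ (using the identity $[A^i, B] = iA^{i-1}f(A)$ together with Cayley--Hamilton), computes its derived series $[V,V] \subset K[A]$, $[[V,V],[V,V]] = \{0\}$, concludes that $V$ is solvable, and then invokes Lie's theorem on solvable Lie algebras---the characteristic hypothesis entering only through the validity of Lie's theorem. Your argument is more elementary and self-contained: you avoid the general machinery of solvable Lie algebras by proving the nilpotency of $[A,B]$ directly via traces and Newton's identities, and then produce a common eigenvector by hand. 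In effect you are re-deriving, for this particular pair, the special case of Lie's theorem that the paper quotes. The paper's route is shorter but black-boxes a nontrivial result; yours makes transparent exactly where the characteristic assumption is consumed. One small remark: in your second step the inclusion you actually need is the reverse one---that each $f(\lambda)$ (for $\lambda$ an eigenvalue of $A$) is an eigenvalue of $f(A)$, not merely that every eigenvalue of $f(A)$ arises this way---but this is of course immediate from triangularizing $A$ over $\overline{K}$.
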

\begin{proof}
Let $V$ be the vector space spanned by $\{B,I_n,A,\cdots,A^{n-1}\}$. One checks easily by induction
  \begin{equation} \label{rec}  \text{for all }i\geq 1,\;\; A^iB-BA^i=iA^{i-1}f(A). \end{equation}
By Cayley-Hamilton's Theorem (that is valid on a commutative ring with unity), $A^iB-BA^i$ belongs to $V$, and $V$ is a Lie's algebra. The derived series of $V$ is
$$V_1=[V,V]\subset K[A],\;\;V_2=[V_1,V_1]=\{0\}.$$
Thus $V$ is solvable. According to Lie's Theorem (that is valid when $\mathrm{char}(K)>n$ or is $0$, cf. \cite[p. 38]{6}), $V$ is triangularizable, that is $A,B$ are $ST$.
\end{proof}
We deduce \textbf{Proposition \ref{nilp}}
\begin{proof}
 According to Eq (\ref{sim}) and Theorem \ref{lie}, $N,B$ and consequently, $A,B$ are $ST$. 
\end{proof}
\begin{rem}
$i)$ The hypothesis about $\mathrm{char}(K)$ is necessary ; indeed, if $n=3$ and $\mathrm{char}(K)=3$, then 
$$A=\begin{pmatrix}0&1&0\\0&0&2\\-1&0&0\end{pmatrix}\;,\;B=\begin{pmatrix}0&0&0\\0&0&-1\\1&0&0\end{pmatrix}$$
satisfy $[A,B]=A^2$ and $A^3=I_3$. \\
$ii)$ According to Theorem \ref{lie}, if $(A,B)\in {M_n(K)}^2$ is a solution of 
$$A^2-2AB+B^2=P(A-B),\text{ where }P\text{ is a polynomial,}$$
 then $A$ and $B$ are $ST$ over $\overline{K}$.
\end{rem}
Let $\alpha\in\llbracket 2,n-1\rrbracket$. According to \cite[Proposition 2.9]{9}, if $X\in M_n(\overline{K})$ is a solution of Eq (\ref{equatLie}), then each generalized eigenspace of $A\in M_n(\overline{K})$ is $X$-invariant. Thus we may assume that $A$ is nilpotent.\\
We consider the algebraic varieties 
$$S_{n,\alpha}=\{(A,B)\in {M_n(\overline{K})}^2\;|\;B\text{ is nilpotent and }(A,B)\text{ satisfies Eq (\ref{relat})}\}$$
$$\text{ and }U_n=\{(A,B)\in {M_n(\overline{K})}^2\;|\;B\text{ is nilpotent and }(A,B)\text{ satisfies Eq (\ref{carre})}\}.$$
Recall that the algebraic variety $N_n$ of nilpotent matrices in $M_n(\overline{K})$ has dimension $n^2-n$ and is irreducible (cf. \cite[Section: \emph{The nilpotent cone}]{21}). Note that the algebraic variety
		$$W_n=\{(A,B)\in {M_n(\overline{K})}^2\;|\;A,B\text{ are nilpotent and  }AB=BA \}$$
		has dimension $\dim(N_n)+(n-1)=n^2-1$ and is irreducible when $\mathrm{char}(K)>n$ (cf. \cite{14}).
\begin{prop}    \label{S_n}
The dimension of $S_{n,\alpha}$ is $n^2-1$.
\end{prop}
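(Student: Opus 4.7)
The plan is to push everything through the first projection $\pi\colon S_{n,\alpha}\to M_n(\overline{K})$, $(A,B)\mapsto A$, and to bound $\dim\pi^{-1}(\mathcal{O})$ stratum by stratum in $N_n$. First, I would observe that $A$ must itself be nilpotent: by Theorem~\ref{lie} applied with $f(x)=x^\alpha$, the pair $(A,B)$ is ST over $\overline{K}$, and in a common triangularizing basis the commutator $[A,B]=A^\alpha$ has zero diagonal, which forces the diagonal of $A$ itself to vanish (since the diagonal of $A^\alpha$ is the $\alpha$-th power of the diagonal of $A$). Hence $\pi$ lands in $N_n$, and I may decompose $S_{n,\alpha}$ according to the nilpotent $GL_n$-orbits $\mathcal{O}=GL_n\cdot A_0$.

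For the upper bound, fix such an orbit and a representative $A_0\in\mathcal{O}$. By $GL_n$-equivariance, $\dim\pi^{-1}(\mathcal{O})=\dim\mathcal{O}+\dim\pi^{-1}(A_0)$. The linear equation $[A_0,B]=A_0^\alpha$ has solution set either empty or an affine translate of $\mathrm{Cent}(A_0)=\ker\mathrm{ad}_{A_0}$, so $\pi^{-1}(A_0)$ is contained in such a translate. Inside $\mathrm{Cent}(A_0)$ the form $\mathrm{tr}$ is non-trivial because $I_n\in\mathrm{Cent}(A_0)$ and $\mathrm{tr}(I_n)=n\neq 0$ (by the hypothesis on $\mathrm{char}(K)$); since every nilpotent $B$ satisfies $\mathrm{tr}(B)=0$, the nilpotent fiber lies in a codimension-one affine subspace of the translate, giving $\dim\pi^{-1}(A_0)\le\dim\mathrm{Cent}(A_0)-1$. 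Combining with the classical identity $\dim\mathcal{O}=n^2-\dim\mathrm{Cent}(A_0)$ yields $\dim\pi^{-1}(\mathcal{O})\le n^2-1$ for every stratum, hence $\dim S_{n,\alpha}\le n^2-1$.

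For the matching lower bound I would exhibit an $(n^2-1)$-dimensional family over the regular nilpotent orbit. Taking $A_0=J_n$, the telescoping identity $[J_n,E_{k,k+\alpha-1}]=E_{k-1,k+\alpha-1}-E_{k,k+\alpha}$ shows that $B_0=\sum_{k=1}^{n-\alpha+1}(k-1)E_{k,k+\alpha-1}$ is a nilpotent solution; every other solution over $J_n$ then has the form $B_0+p(J_n)$ with $p\in\overline{K}[x]$ of degree $<n$, and nilpotency of $B_0+p(J_n)=p(0)I_n+(\text{strictly upper})$ is equivalent to $p(0)=0$. This yields an $(n-1)$-dimensional nilpotent fiber; since $\dim\mathcal{O}_{J_n}=n^2-n$, the preimage of the regular orbit already has dimension $(n^2-n)+(n-1)=n^2-1$, matching the upper bound.

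The main obstacle I foresee is keeping the upper-bound argument uniform across all nilpotent strata, including the degenerate case $A_0=0$ where $\mathrm{Cent}(A_0)=M_n$ and the fiber is the entire nilpotent cone of dimension $n^2-n$; the estimate $\dim\pi^{-1}(A_0)\le\dim\mathrm{Cent}(A_0)-1=n^2-1$ survives there but is far from tight. The characteristic hypothesis $\mathrm{char}(K)>n$ or $0$ enters only to ensure $\mathrm{tr}(I_n)\neq 0$, and everything else reduces either to Theorem~\ref{lie} or to the Jordan-block calculation above.
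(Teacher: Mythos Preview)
Your argument is correct and takes a genuinely different route from the paper's. The paper projects onto the \emph{second} factor $B$, fixes $B=J_n$, and analyses the \emph{nonlinear} fibre equation $XJ_n-J_nX=X^{\alpha}$ by quoting \cite[Remark~3.4]{9} to parametrise solutions by $x_{1,2},\dots,x_{1,n}$; it then asserts without further detail that a component of maximal dimension must lie over the regular orbit. You instead project onto the \emph{first} factor $A$, which has two payoffs: the fibre equation $[A_0,B]=A_0^{\alpha}$ becomes \emph{linear} in $B$, so the fibre is visibly an affine translate of $\mathrm{Cent}(A_0)$ and the trace trick gives a clean, uniform upper bound $\dim\pi^{-1}(\mathcal{O})\le n^2-1$ over every nilpotent orbit; and the whole argument is self-contained, needing no external reference beyond the paper's own Theorem~\ref{lie}. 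The paper's approach, on the other hand, yields the extra structural information (used in the subsequent Remark) that a generic $A$ in $S_{n,\alpha}$ is itself similar to $J_n$. One small point worth making explicit in your write-up: the stratification argument uses that $N_n$ has only finitely many $GL_n$-orbits, so that $S_{n,\alpha}$ is a finite union of the locally closed pieces $\pi^{-1}(\mathcal{O})$.
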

\begin{proof} A generic nilpotent matrix $B$ is similar to $J_n$.
 Put $B=J_n$ and consider the equation $XJ_n-J_nX=X^{\alpha}$. According to \cite[Remark 3.4]{9}, $X$ is strictly upper triangular and we can express the entries $(x_{i,j})$ of $X$ as functions of $x_{1,2},\cdots,x_{1,n}$. Then the algebraic variety $Y_{n,\alpha}=\{X\;|\;XJ_n-J_nX=X^{\alpha}\}$ has dimension $n-1$. Moreover, if $x_{1,2}$ is chosen non-zero, then $X$ is similar to $J_n$. Thus a component of $S_{n,\alpha}$ of maximal dimension is obtained for generic nilpotent matrices $B$.  We deduce that $\dim(S_{n,\alpha})=\dim(N_n)+\dim(Y_{n,\alpha})=n^2-1$.
\end{proof}
\begin{rem}
$i)$ According to the previous proof, when $n>2$, there are pairs $(A,B)$ of $S_{n,\alpha}$ such that $A$ and $B$ do not commute.\\
$ii)$ Note that $\{(A,B)\in {M_n(\overline{K})}^2\;|\;A,B\text{ are similar to  }J_n\text{ and satisfy Eq (\ref{relat})}\}$ is Zariski open dense in a maximal component of $S_{n,\alpha}$.\\
$iii)$ We may wonder whether $S_{n,\alpha}$ is irreducible when $\mathrm{char}(K)=0$.
\end{rem}
\begin{prop}
The dimension of $U_n$ is $n^2-1$.
\end{prop}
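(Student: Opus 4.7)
The plan is to mimic the proof of Proposition \ref{S_n}, exploiting the equivalence between Eq~(\ref{carre}) and Eq~(\ref{sim}). Setting $N:=A-B$, a direct expansion gives $A^2-2AB+B^2 = N^2-[A,B] = N^2-[N,B]$, so Eq~(\ref{carre}) is equivalent to $NB-BN=N^2$. Fixing $B=J_n$, this is exactly the $\alpha=2$ instance of the equation $XJ_n-J_nX=X^{\alpha}$ appearing in the proof of Proposition~\ref{S_n}; consequently the fiber over $J_n$ of the projection $U_n\to N_n$, $(A,B)\mapsto B$, is the translate $J_n+Y_{n,2}$, which has dimension $n-1$.

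Next, since the $GL_n$-orbit of $J_n$ under simultaneous conjugation is dense in $N_n$ of dimension $n^2-n$, and Eq~(\ref{carre}) is equivariant under this action, sweeping the slice $\{(J_n+N,J_n):N\in Y_{n,2}\}$ through that orbit produces a component of $U_n$ of dimension $\dim(N_n)+\dim(Y_{n,2})=(n^2-n)+(n-1)=n^2-1$, which gives the desired lower bound. For the upper bound I would invoke Proposition~\ref{nilp}, which forces every $(A,B)\in U_n$ to have $A$ also nilpotent and $A,B$ simultaneously triangularizable; stratifying $N_n$ by the Jordan type of $B$ and using semi-continuity of fiber dimension (in the spirit of the tacit argument in Proposition~\ref{S_n}), one expects no non-regular stratum to produce a larger component.

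The main obstacle is precisely this last upper-bound step, which in Proposition~\ref{S_n} is asserted rather than proved. If full rigor is required, I would treat each non-regular Jordan type of $B$ separately, using the strict upper-triangularity granted by Proposition~\ref{nilp} to write down explicit equations for the corresponding analogue of $Y_{n,2}$ and thereby bound the fiber dimension, then verify case by case that the drop in orbit dimension dominates any possible gain in fiber dimension, keeping each stratum at total dimension at most $n^2-1$.
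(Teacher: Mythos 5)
Your argument is correct and takes essentially the same route as the paper: the paper also uses the substitution $N=A-B$ and Eq~(\ref{sim}) to identify $U_n$ with $S_{n,2}$ (via the linear isomorphism $(A,B)\mapsto(A-B,B)$) and then simply quotes Proposition~\ref{S_n}, giving $\dim(U_n)=\dim(S_{n,2})=n^2-1$. The only difference is that you re-run the fiber count of Proposition~\ref{S_n} inside $U_n$ rather than invoking that proposition directly, and you are more candid than the paper about the upper-bound step, which is indeed only asserted, not proved, in the proof of Proposition~\ref{S_n}.
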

\begin{proof}
Note that $A$ is also nilpotent. According to Proposition \ref{S_n} with $\alpha =2$ and Eq (\ref{sim}), $\dim(U_n)=\dim(S_{n,2})=n^2-1$.
\end{proof}
		\begin{prop}  \label{dim3} We assume that $\mathrm{char}(K)>3$ or is $0$. Let $(A,B)\in M_3(K)$ be a solution of Eq (\ref{carre}) such that $AB\not=BA$. Then there are an invertible matrix $P$ and $\lambda\in K$ such that $P^{-1}AP$ and $P^{-1}BP$ are both in the form $\begin{pmatrix}\lambda&*&*\\0&\lambda&*\\0&0&\lambda\end{pmatrix}$.	Conversely, there exist such solutions.	
		\end{prop}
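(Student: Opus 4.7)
The plan is to identify the constraint that $AB \neq BA$ imposes on the spectrum, deduce that $A$ and $B$ must both have a single eigenvalue $\lambda \in K$ of multiplicity $3$, and then descend the simultaneous triangularization of Proposition \ref{nilp} from $\overline{K}$ to $K$; the converse will be a short explicit construction.

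First, I apply Corollary \ref{mult2}: if every eigenvalue of $A$ had multiplicity at most $2$, then $AB$ would equal $BA$, contradicting the hypothesis. So in $M_3(K)$, $A$ must have a unique eigenvalue $\lambda$ of multiplicity $3$. Since $\mathrm{char}(K) \neq 3$, the identity $\mathrm{tr}(A) = 3\lambda$ forces $\lambda \in K$. By the facts from \cite{1} recalled in the introduction, $A$ and $B$ share the same spectrum with matching generalized eigenspaces, so $B$ also has $\lambda$ as its sole eigenvalue.

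Second, Proposition \ref{nilp} provides $ST$ over $\overline{K}$, hence in particular a common eigenvector $v \in \overline{K}^3$. Since the only possible eigenvalue for either matrix is $\lambda \in K$, this common eigenvector lies in the $K$-subspace $\ker(A - \lambda I_3) \cap \ker(B - \lambda I_3)$ (defined by a linear system with coefficients in $K$), which is therefore non-zero over $K$. Picking a nonzero $v_1$ in this intersection and completing to a basis of $K^3$, the induced $2 \times 2$ pair on the quotient still satisfies Eq (\ref{carre}), is still $ST$ over $\overline{K}$, and still has only $\lambda$ as eigenvalue; a second application of the same argument produces $P \in GL_3(K)$ conjugating both matrices into the claimed upper triangular form with diagonal $(\lambda, \lambda, \lambda)$.

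For the converse I exploit the reformulation Eq (\ref{sim}) and seek strictly upper triangular $N, B \in M_3(K)$ with $N^2 = [N,B] \neq 0$. With $N = J_3$, the product $N^2$ is the matrix unit in position $(1,3)$, and taking $B$ with its unique nonzero entry in position $(2,3)$ solves $[J_3, B] = N^2$; then $A = J_3 + B$ is strictly upper triangular, $AB \neq BA$, and Eq (\ref{carre}) is satisfied. Adding $\lambda I_3$ yields examples for every $\lambda \in K$. I do not foresee any serious obstacle: the heavy lifting is done by Corollary \ref{mult2} and Proposition \ref{nilp}, and the only mildly subtle point is the descent of triangularization to $K$, which is automatic since all eigenvalues already lie in $K$.
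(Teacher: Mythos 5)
Your proof is correct and follows essentially the same route as the paper: Corollary \ref{mult2} forces a single eigenvalue $\lambda=\tfrac{1}{3}\mathrm{trace}(A)\in K$, Proposition \ref{nilp} gives the simultaneous triangularization, and the converse is settled by an explicit strictly upper triangular example (yours, $N=J_3$, $B=E_{23}$, is the paper's $(J_3,\mathrm{diag}(0,\tfrac12 J_2))$ up to rescaling). The only difference is that you spell out the descent of the triangularization from $\overline{K}$ to $K$ via the $K$-rationality of $\ker(A-\lambda I)\cap\ker(B-\lambda I)$, a point the paper's proof leaves implicit.
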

		\begin{proof}
		According to Corollary \ref{mult2}, necessarily $A$ and $B$ have a sole eigenvalue $\lambda=\dfrac{1}{3}\mathrm{trace}(A)$, that is necessarily in $K$. We conclude with Proposition \ref{nilp}.	An instance of such a solution is $(J_3,\mathrm{diag}(0,\dfrac{1}{2}J_2))$.	
		\end{proof}
		\begin{rem}
		$i)$	We may wonder whether $A,B$ are still $ST$ when $k=3$, that is when $(A,B)$ is a solution of Eq (\ref{equat3}).
							The answer is no, as we can see with the following solution of Eq (\ref{equat3}) when $n=4$	and $\mathrm{char}(K)\geq 5$	or $=0$
		 $$(\begin{pmatrix}0&4/3&-1/3&-1\\1&0&3/4&-3/4\\1&0&0&0\\1&0&0&0\end{pmatrix},J_4).$$
		Clearly $A,B$ are nilpotent and $[A,B]$ is invertible. We say that a pair $(U,V)$ have property L (cf. \cite{13}) if there are orderings of the eigenvalues $(\lambda_i),(\mu_i)$ of $U,V$ such that, for every $a\in K$, the eigenvalues of $U+aV$ are $(\lambda_i+a\mu_i)$ ; if $U,V$ are $ST$, then they have property L. In our instance, $(A,B)$ have not property L because, if $a\not=0$, then $A+aB$ is not nilpotent.\\
		$ii)$ We consider the algebraic variety 
		$$V_4=\{(A,B)\in {M_4(K)}^2\;|\;A,B\text{ are nilpotent and satisfy Eq (\ref{equat3}) }\}.$$
		We use a similar argument to that used in the proof of Proposition \ref{S_n} ; by the Gr\"obner basis method, we solve Eq (\ref{equat3}) with $A=J_4$ ; we obtain an algebraic set of solutions in $B$ that has $6$ as Hilbert dimension. A component of $V_4$ of maximal dimension is obtained for generic nilpotent matrices $A$, that is, for matrices $A$ that are similar to $J_4$. Finally $\dim(V_4)=\dim(N_4)+6=18$.  \\
	A similar calculation shows that the algebraic variety  
		$$V_4\cap W_4=\{(A,B)\in {M_4(K)}^2\;|\;A,B\text{ are nilpotent },AB=BA\text{ and }(A-B)^3=0 \}$$
		has dimension: $\dim(N_4)+2=14$.
				\end{rem}
Let $\alpha,n$ be integers $\geq 2$.
\begin{prop}  \label{zerofield}
Let $K$ be a field such that $\mathrm{char}(K)>n$ or is $0$. If $A\in M_n(K)$ has $n$ distinct eigenvalues in $\overline{K}$, then $X=0$ is the sole solution of Eq (\ref{equatLie}).
\end{prop}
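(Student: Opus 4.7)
The plan is to apply Theorem~\ref{lie} with the roles of the two matrices exchanged. Rewriting Eq~(\ref{equatLie}) as $[X,A]=-X^{\alpha}=f(X)$ where $f(t)=-t^{\alpha}$ is a polynomial, Theorem~\ref{lie} (applicable since $\mathrm{char}(K)>n$ or is $0$) yields $P\in GL_n(\overline{K})$ that simultaneously triangularizes $X$ and $A$. After conjugating by $P$, I may assume $A$ and $X$ are upper triangular, with diagonals $(\lambda_{\sigma(1)},\ldots,\lambda_{\sigma(n)})$ and $(\mu_1,\ldots,\mu_n)$ respectively, where $\sigma$ is some permutation and the $\lambda_i$ remain pairwise distinct.

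First, compare the diagonal entries of $AX-XA=X^{\alpha}$. The diagonal of any commutator is zero, while the diagonal of $X^{\alpha}$ (for $X$ upper triangular) is $(\mu_i^{\alpha})$. Hence $\mu_i^{\alpha}=0$ in the field $\overline{K}$, forcing $\mu_i=0$, so $X$ is strictly upper triangular.

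Next, I prove by induction on $d\geq 1$ that the $d$-th superdiagonal of $X$ vanishes. Under the inductive hypothesis $x_{i,i+e}=0$ for $1\leq e<d$, the matrix $X$ has its first possibly nonzero entries on the $d$-th superdiagonal, so $X^{\alpha}$ has zeros on superdiagonals $1,\ldots,\alpha d-1$; in particular its $d$-th superdiagonal is zero because $\alpha d>d$. A short bookkeeping argument on the products $AX$ and $XA$ shows that the only index contributing to $(AX)_{i,i+d}$ is $k=i$ and to $(XA)_{i,i+d}$ is $k=i+d$ (any other index would require a nonzero entry of $X$ on a superdiagonal already known to vanish). This gives
\[
(AX-XA)_{i,i+d}=(\lambda_{\sigma(i)}-\lambda_{\sigma(i+d)})\,x_{i,i+d}=0,
\]
and distinctness of the $\lambda$'s forces $x_{i,i+d}=0$. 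Induction on $d$ exhausts all superdiagonals, so $X=0$ over $\overline{K}$, and hence over $K$.

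No serious obstacle arises; the only place that requires care is the inductive computation of $(AX-XA)_{i,i+d}$, where one must verify that the strictly upper triangular part of $A$ produces no contribution. This is purely a matter of tracking the support of the upper triangular factors. The proposition thus reduces to Theorem~\ref{lie} plus elementary linear algebra of upper triangular matrices with distinct diagonal entries.
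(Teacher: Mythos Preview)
Your proof is correct. Both your argument and the paper's begin by invoking Theorem~\ref{lie} (with the roles of the two matrices swapped, since $[X,A]=-X^{\alpha}$), but they diverge after that common first step.

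The paper does not pass to an explicit triangular form. Instead it isolates the key property $\mathcal{P}$ of $A$: any matrix commuting with $A$ whose $\alpha$-th power vanishes is zero. Using the identity $[A,X^{i}]=iX^{i+\alpha-1}$ (Eq~(\ref{rec}) with roles swapped), it argues that if the nilindex $k$ of $X$ exceeded $\alpha$, then $X^{k-\alpha+1}$ would commute with $A$ and satisfy $(X^{k-\alpha+1})^{\alpha}=0$, forcing $X^{k-\alpha+1}=0$ and contradicting the definition of $k$. Hence $X^{\alpha}=0$, so $[A,X]=0$, and one more application of $\mathcal{P}$ gives $X=0$.

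Your route is more hands-on: you work directly in a simultaneous upper-triangular basis and clear $X$ one superdiagonal at a time. The bookkeeping you flag is indeed the only delicate point, and it checks out: for $(AX)_{i,i+d}$ the constraint $a_{i,k}\neq 0\Rightarrow k\geq i$ from $A$ and $x_{k,i+d}\neq 0\Rightarrow i+d-k\geq d$ from the inductive hypothesis force $k=i$, and symmetrically for $(XA)_{i,i+d}$; the strictly upper-triangular part of $A$ never contributes. Your approach is more elementary and entirely self-contained once triangularization is in hand. The paper's nilindex-descent argument, on the other hand, is coordinate-free and is the template reused later (e.g.\ in the proof of Theorem~\ref{main}) where one cannot simply read off entries.
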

\begin{proof}
Note that $A$ satisfies the property
$$\mathcal{P}:\text{ for every }Y\in M_n(K),\;\{AY=YA,\;Y^{\alpha}=0\}\text{ imply }Y=0.$$ 
Indeed we may assume that $A$ is a diagonal matrix over $\overline{K}$. Since $Y$ commute with $A$, $Y$ is also diagonal and clearly, $Y=0$.
According to Theorem \ref{lie}, $[A,X]$ is nilpotent and $X$ too ; assume that $k$, the nilindex of $X$, is greater than $\alpha$. According to Eq (\ref{rec}), $[A,X^{k-\alpha+1}]=0$ ; by the property $\mathcal{P}$ and $X^{(k-\alpha+1)\alpha}=0$, we deduce $X^{k-\alpha+1}=0$, that is contradictory and therefore $k\leq\alpha$. Thus $X^{\alpha}=0$ and $AX=XA$ ; by Property $\mathcal{P}$, we conclude that $X=0$.
\end{proof}
\begin{rem} The previous result is shown, when $K$ is a field of characteristic $0$, in \cite[Proposition 2.5]{9}.
\end{rem}
				
				\section{Equations (7), (8) over a ring} 
\begin{defi} $i)$  Let $(R_i)_{i\in I}$ be commutative rings with unity. Their ring subdirect product $R$ is defined if
there is $f:R\rightarrow \Pi_{i\in I}R_i$ an injective ring homomorphism such that, for every $j\in I$, the projection of $f$ on $R_j$ is onto.\\
$ii)$ A commutative ring $R$ with unity is reduced if for every $u\in R$, $u^2=0$ implies $u=0$. That is equivalent to $R$ is isomorphic to a subring of a direct product of fields or isomorphic to a subdirect product of domains (cf. \cite[Theorem 11.6.7]{12}).\\
For instance, $R=\mathbb{Z}\times \mathbb{Z}$ is a reduced ring with $\mathrm{char}(R)=0$. More generally, $\mathrm{char}(R)$ is $0$ or a product of distinct primes. Note that $R=\mathbb{Z}/3\mathbb{Z}\times \mathbb{Z}$ is reduced with $\mathrm{char}(R)=0$ and yet, $3=(0,3)$ is a zero-divisor.
\end{defi}
We show \textbf{Proposition \ref{red}}.
\begin{proof}
 Since $R$ is a subring of a direct product of algebraic closed fields $\Pi_{i\in I}K_i$, we may assume $R=\Pi_{i\in I}K_i$ where, for every $i\in I$, $K_i$ is a field such that $\mathrm{char}(K_i)>n$ or is $0$. Let $X=(X_i)_i$ and $A=(A_i)_i$. Thus, for any $i\in I$, $X_iA_i-A_iX_i={X_i}^{\alpha}$ where the $i^{th}$ component $A_i\in M_n(K_i)$ of $A$ is generic ; then, for every $i$, the discriminant of $\chi_{A_i}$ is not $0$ and the matrix $A_i$ has $n$ distinct eigenvalues. According to Proposition \ref{zerofield}, for every $i\in I$, $X_i=0$ and consequently $X=0$.
\end{proof}
\begin{prop} \label{domain} Let $R$ be a commutative ring with unity such that $n!$ is not a zero-divisor and let $A\in M_n(R)$.
If $X\in M_n(R)$ is a solution of Eq (\ref{equatLie}), then $X$ is a nilpotent matrix.
\end{prop}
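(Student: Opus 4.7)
My plan is to show that $Y:=X^\alpha$ is nilpotent by establishing $\chi_Y(t)=t^n$; then Cayley-Hamilton yields $X^{n\alpha}=Y^n=0$, so $X$ is nilpotent.

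The first step is to reproduce the inductive derivation behind identity (\ref{rec}) in Theorem~\ref{lie}, but with the roles of $A$ and $X$ exchanged. Starting from $[A,X]=X^\alpha$ and using the Leibniz identity $[A,X^{i+1}]=[A,X^i]\,X+X^i\,[A,X]$, I would prove by induction on $i\ge 1$ that
\[ [A,X^i]\;=\;i\,X^{i+\alpha-1}. \]
Taking the trace of both sides and using that the trace of a commutator vanishes in $M_n(R)$, we obtain $i\cdot\mathrm{tr}(X^{i+\alpha-1})=0$ in $R$ for every $i\ge 1$. Since $n!$ is a non-zero-divisor and any divisor of a non-zero-divisor is itself a non-zero-divisor, each of $1,2,\ldots,n$ is a non-zero-divisor in $R$. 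Hence $\mathrm{tr}(X^k)=0$ for $k=\alpha,\alpha+1,\ldots,\alpha+n-1$, and the Cayley-Hamilton-driven recurrence
\[ p_k\;=\;s_1\,p_{k-1}-s_2\,p_{k-2}+\cdots+(-1)^{n-1}s_n\,p_{k-n}\qquad(k\ge n) \]
propagates the vanishing to every $k\ge\alpha$.

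Setting $Y:=X^\alpha$, this gives $\mathrm{tr}(Y^j)=\mathrm{tr}(X^{\alpha j})=0$ for every $j\ge 1$, in particular for $j=1,\ldots,n$. Newton's identities, formal polynomial identities valid over any commutative ring, express $k\,s_k(Y)$ as a polynomial in $p_1(Y),\ldots,p_k(Y),s_1(Y),\ldots,s_{k-1}(Y)$. A straightforward induction on $k=1,\ldots,n$, using once more that each such $k$ is a non-zero-divisor, forces $s_k(Y)=0$. Thus $\chi_Y(t)=t^n$ and Cayley-Hamilton gives $Y^n=X^{n\alpha}=0$.

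The only delicate bookkeeping is the repeated transfer of the hypothesis from $n!$ to each of $1,\ldots,n$, which is elementary; the argument requires no assumption whatsoever on $A$, nor any reduction to residue fields or algebraic closures, in contrast with Proposition~\ref{red} where $A$ had to be generic.
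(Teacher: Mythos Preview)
Your argument is correct. The inductive identity $[A,X^i]=iX^{i+\alpha-1}$, the vanishing of traces of commutators, the Cayley--Hamilton recurrence for the $p_k(X)$, and the Newton identities for $Y=X^{\alpha}$ are all valid over an arbitrary commutative ring, and the repeated cancellation of the factors $1,\dots,n$ is justified exactly as you say, since a divisor of a non-zero-divisor is again a non-zero-divisor.

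Your route, however, differs from the paper's. The paper observes that $[A,X]=X^{\alpha}$ commutes with $X$ and then invokes the proof of Jacobson's lemma (cited from \cite{2}), which over a commutative ring yields $n!\,[A,X]^{2^{n}-1}=0$; cancelling $n!$ gives nilpotency of $X^{\alpha}$, hence of $X$. Your proof is more self-contained: instead of appealing to that external lemma, you run a direct trace argument on the same commutator identity and finish with Newton's identities. As a bonus, your method produces the much sharper bound $X^{n\alpha}=0$, whereas the Jacobson route only gives $X^{\alpha(2^{n}-1)}=0$. The paper's approach, on the other hand, is shorter once the lemma is taken for granted and places the result in the broader Kleinecke--Shirokov context (cf.~\cite{20}).
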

\begin{proof} 
Note that $[A,X]$ and $X$ commute and that the Cayley-Hamilton theorem is true over $R$. According to the proof of Jacobson lemma (cf. introduction of \cite{2} and also \cite{20} where an improvement of this result is stated within the framework of the algebraic operators on a complex Banach space), $n!\;[A,X]^{2^n-1}=0$, that implies $[A,X]^{2^n-1}=0$ and we are done.
\end{proof}
If $A$ is generic over $R$, then we have a more precise result for small $n,\alpha$.
\begin{prop} Let $n=2$, $2\leq \alpha\leq 4$. Let $R$ be a commutative ring with unity such that, if $\alpha=2,3$ or $4$, then $2,3!$ or $5!$ is not a zero-divisor. Let $A\in M_2(\tilde{R})$ be a generic matrix.  If $X=[x_{i,j}]\in M_2(\tilde{R})$ is a solution of Eq (\ref{equatLie}) 
 then 
$$AX-XA=X^{\alpha}=0\text{ and for every }(i,j), \;{x_{i,j}}^{2\alpha-1}=0.$$
\end{prop}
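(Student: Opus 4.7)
The plan is to combine abstract commutation arguments with explicit exploitation of the generic structure of $A$. First, Proposition \ref{domain} applies since all three hypotheses ensure $2$ is a non-zero-divisor, so $X$ is nilpotent. Next, because $X$ commutes with $[A, X] = X^{\alpha}$, an easy induction via the Leibniz rule gives $[A, X^{k}] = k X^{k + \alpha - 1}$ for $k \geq 1$; taking traces and dividing by $k$ (legitimate since, by hypothesis, $k!$ is a non-zero-divisor in the relevant range, and $k!$ NZD implies $k$ NZD) yields $\mathrm{tr}(X^{m}) = 0$ for $m = \alpha, \alpha + 1, \ldots$.

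With $t = \mathrm{tr}(X)$ and $D = \det(X)$, Cayley-Hamilton in $M_{2}$ gives $X^{2} = tX - DI$ and hence the Newton-type recurrence $p_{m} = t p_{m-1} - D p_{m-2}$ on the power sums $p_{m} = \mathrm{tr}(X^{m})$. Feeding in the vanishings above and using the inversion of small integers allowed by the hypothesis, one extracts sharp nilpotency relations for $t$ and $D$; concretely, for $\alpha = 2$ these are $t^{3} = 0$, $tD = 0$, $D^{2} = 0$. Writing the equation $[A, X] = X^{\alpha}$ entry-wise with $A = \bigl(\begin{smallmatrix} a & b \\ c & d \end{smallmatrix}\bigr)$ generic and using Cayley-Hamilton on the right-hand side, the $(1,2)$-entry equation evaluated at $b = 0$, combined with the fact that a sum of a non-zero-divisor and a nilpotent in $\tilde{R}$ is again a non-zero-divisor, forces $q = b\tilde{q}$; symmetrically $r = c\tilde{r}$. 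Substituting these back and exploiting the nilpotency relations collapses the equation to $[A, X] = 0$, i.e., $X^{\alpha} = 0$.

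Given $X^{\alpha} = [A, X] = 0$, the entry equations further yield $q = bh$, $r = ch$, $p - s = h(a - d)$ for a single element $h \in \tilde{R}$, and a direct computation gives $4D = t^{2} - h^{2} \Delta$, where $\Delta = (a-d)^{2} + 4bc$ is the discriminant of $\chi_{A}$. Since $\Delta$ is a non-zero-divisor in $\tilde{R}$ for any $R$ (its leading coefficient in $a$ equals $1$), this identity propagates the nilpotency bounds on $t, D$ into $h$, yielding $h^{2\alpha - 1} = 0$ along with companion relations such as $t h^{k} = 0$ for the relevant $k$. Expanding $(2p)^{2\alpha - 1} = (t + h(a-d))^{2\alpha - 1}$ binomially and using these vanishings gives $(2p)^{2\alpha - 1} = 0$; since $2^{2\alpha - 1}$ is a non-zero-divisor, $p^{2\alpha - 1} = 0$. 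Analogously $s^{2\alpha - 1} = 0$, while $q^{2\alpha - 1} = b^{2\alpha - 1} h^{2\alpha - 1}$ and $r^{2\alpha - 1} = c^{2\alpha - 1} h^{2\alpha - 1}$ vanish trivially.

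The main obstacle will be establishing $X^{\alpha} = 0$ for $\alpha = 3$ and $\alpha = 4$: the explicit Cayley-Hamilton polynomials $P_{\alpha}(t, D)$ that must vanish modulo the derived ideal become increasingly intricate, and the precise integer coefficients appearing in the Newton identities and the Cayley-Hamilton expansion explain why the hypothesis must be sharpened to ``$3!$ a non-zero-divisor'' and ``$5!$ a non-zero-divisor'' rather than simply ``$\alpha!$''. I expect this step is verified case-by-case, possibly via a Gr\"obner-basis computation analogous to those used earlier in the paper.
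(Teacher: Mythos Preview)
Your approach is entirely different from the paper's. The paper's proof is literally three sentences: it sets up the system of four polynomial equations in the four unknowns $x_{i,j}$ over $R[a_{i,j}]$ and states that a Gr\"obner basis computation (in the relevant characteristics) yields the required relations. No trace identities, no Cayley--Hamilton reduction, no entry-by-entry analysis---just a computer-algebra verification.

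Your route via Proposition~\ref{domain}, the Leibniz identity $[A,X^k]=kX^{k+\alpha-1}$, Newton sums, and Cayley--Hamilton is a genuine attempt at a conceptual proof, and for $\alpha=2$ the derivation of $t^3=0$, $tD=0$, $D^2=0$ from $p_2=p_3=0$ and $t^2=2D$ is clean. What your approach \emph{buys} is insight into why the specific exponent $2\alpha-1$ appears and why the divisibility hypotheses scale the way they do; the paper's Gr\"obner computation gives no such explanation. What the paper's approach buys is completeness: it handles all three cases uniformly with no case analysis. Your own final paragraph concedes that for $\alpha=3,4$ you expect to fall back on a Gr\"obner computation anyway, so in the end both routes rely on machine verification for the harder cases.

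One point to tighten if you pursue the conceptual route: the step ``the $(1,2)$-entry equation evaluated at $b=0$ \dots forces $q=b\tilde q$'' needs care. The entries $p,q,r,s$ of $X$ are themselves elements of $\tilde R=R[a,b,c,d]$, so specialising $b\mapsto 0$ only constrains $q|_{b=0}$, not $q$. What you actually want is to view the $(1,2)$-equation $(a-d)q-(p-s)b=(X^\alpha)_{1,2}$ as an identity in $\tilde R$, rewrite $(X^\alpha)_{1,2}$ via Cayley--Hamilton as (polynomial in $t,D$)$\cdot q$, and then argue that $a-d$ minus a nilpotent is a non-zero-divisor in $\tilde R$; for that you first need $t$ and $D$ nilpotent, which does follow from $X$ nilpotent but is itself a small lemma (cf.\ the Almkvist reference the paper cites for the trace bound).
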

\begin{proof} 
 The $4$ parameters are the $(a_{i,j})$. We have a system of $4$ equations in the $4$ unknowns $(x_{i,j})$. Using Gr\"obner basis theory in any specified characteristic, we obtain the required result. 
\end{proof}
When $n=3,4,5$ and  $2\leq \alpha\leq 4$, the calculations have great complexity ; thus we carry out specializations of the $(a_{i,j})$ in the ring $R$. Then we randomly choose the matrix $A$ (in order to simulate the generic nature of the matrix) and we formally solve Eq (\ref{equatLie}) in the $n^2$ unknowns $(x_{i,j})$. Numerical experiments, again using Gr\"obner basis theory in characteristic great enough, lead to the following result: $X^{\alpha}=0_n$ and for every $(i,j)$, ${x_{i,j}}^{(\alpha-1)n+1}=0$ ; for instance, if $n=5,\alpha=4$, then the supplementary condition is: ``$13\;!$ is not a zero-divisor''.  Therefore we conjecture
\begin{conj}
Let $n\geq 2$, $R$ be a commutative ring with unity satisfying a condition in the form: ``the integer $\phi(n,\alpha)!$ is not a zero-divisor''. Let $A\in M_n(\tilde{R})$ be a generic matrix. If $X=[x_{i,j}]\in M_n(\tilde{R})$ is a solution of Eq (\ref{equatLie}), then $AX-XA=X^{\alpha}=0_n$ and for every $(i,j)$, ${x_{i,j}}^{(\alpha-1)n+1}=0$.
\end{conj}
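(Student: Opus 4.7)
The plan is to extend the strategy of Proposition~\ref{zerofield} from a field to the polynomial ring $\tilde R$, preceded by Proposition~\ref{domain} to secure the nilpotency of $X$. Assuming $\phi(n,\alpha)!$ is not a zero-divisor (which we take to imply in particular that $n!$ is not a zero-divisor), Proposition~\ref{domain} guarantees that $X$ is nilpotent; let $k$ denote its nilpotency index. The inductive identity analogous to Eq~(\ref{rec}) applied to $[A,X]=X^\alpha$ gives $[A,X^i]=iX^{\alpha+i-1}$ for every $i\ge 1$. Substituting $i=k-\alpha+1$ yields $[A,X^{k-\alpha+1}]=(k-\alpha+1)X^k=0$, so $X^{k-\alpha+1}$ commutes with $A$ (using that $k-\alpha+1$ is not a zero-divisor). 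The generic $A$ has a discriminant $\Delta$ that is not a zero-divisor; after inverting $\Delta$, $A$ is cyclic and its centralizer equals $\tilde R_\Delta[A]$. Mimicking Proposition~\ref{zerofield}---a nilpotent polynomial in a cyclic matrix with formally distinct eigenvalues must vanish---yields $X^{k-\alpha+1}=0$, and iterating this reduction eventually brings $k\le\alpha$, whence $X^\alpha=0$ and $AX=XA$.

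To obtain the entry bound $x_{i,j}^{(\alpha-1)n+1}=0$, write $X=\sum_{i=0}^{n-1}c_iA^i$ with $c_i\in\tilde R$ (legitimate because $X$ centralizes the cyclic matrix $A$). After scalar extension to a ring in which $\chi_A$ splits with roots $\lambda_1,\dots,\lambda_n$, the matrix $X$ becomes simultaneously diagonal with entries $p(\lambda_1),\dots,p(\lambda_n)$, and $X^\alpha=0$ forces $p(\lambda_j)^\alpha=0$ for each $j$. The inverse Vandermonde in the $\lambda_j$'s expresses each $c_i$---and hence each entry $x_{i,j}=\sum_\ell c_\ell(A^\ell)_{i,j}$---as an $\tilde R$-linear combination of the $n$ pairwise commuting nilpotents $p(\lambda_1),\dots,p(\lambda_n)$. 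A multinomial/pigeonhole estimate for a sum of $n$ commuting elements, each of nilpotency index at most $\alpha$, then yields $x_{i,j}^{n(\alpha-1)+1}=0$, matching the predicted exponent.

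The main obstacle is the final step: making the ``simultaneous diagonalization'' of $X$ and $A$ rigorous over the non-domain $\tilde R$ and ensuring that the exponent bound descends faithfully from the scalar extension back to $\tilde R$. One expects the reduced-ring subdirect decomposition from the proof of Proposition~\ref{red}, combined with careful localization at $\Delta$, to carry this through; the same analysis should dictate the precise integer $\phi(n,\alpha)$ whose factorial must be required to be not a zero-divisor, although the numerical evidence ($\phi(5,4)=13$ versus the entry exponent $16$) suggests that $\phi$ is strictly smaller than $(\alpha-1)n+1$ and its exact form remains to be pinned down.
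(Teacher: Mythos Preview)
The statement you are attacking is labelled a \emph{Conjecture} in the paper; no proof is offered there, only Gr\"obner-basis evidence for small $(n,\alpha)$. So there is nothing to compare against, and the question is simply whether your outline is a proof. It is not, and the gap sits earlier than where you place your ``main obstacle''. Your descent step asserts that $X^{k-\alpha+1}$ commutes with $A$ and is nilpotent, hence $X^{k-\alpha+1}=0$, because ``a nilpotent polynomial in a cyclic matrix with formally distinct eigenvalues must vanish''. That is exactly property $\mathcal{P}$ of Proposition~\ref{zerofield}, and it is a field (or reduced-ring) phenomenon. Over $\tilde R_\Delta$ with $R$ non-reduced it fails outright: if $\tau\in R$ satisfies $\tau\neq 0$ and $\tau^{2}=0$, then $\tau I_n\in\tilde R_\Delta[A]$ is nilpotent and non-zero; more generally $\tilde R_\Delta[t]/(\chi_A)$ inherits every nilpotent of $R$. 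From $[A,X^{k-\alpha+1}]=0$ you can therefore only conclude that the diagonal entries $p(\lambda_i)$ of $X^{k-\alpha+1}$ are nilpotent elements of the splitting ring, not that they vanish, and the iteration down to $k\le\alpha$ never gets started. When $R$ \emph{is} reduced your argument does go through, but then it gives $X=0$ and merely reproves Proposition~\ref{red}; the content of the conjecture lies precisely in the non-reduced case, where the $x_{i,j}$ are genuinely non-zero nilpotents.

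Two minor points and a suggestion. The parenthetical ``using that $k-\alpha+1$ is not a zero-divisor'' is unnecessary, since $X^{k}=0$ already makes $(k-\alpha+1)X^k=0$. Inverting $\Delta$ alone does not obviously make the generic $A$ cyclic over $\tilde R_\Delta$; you also need to invert the determinant of some $(v,Av,\dots,A^{n-1}v)$, though this is harmless. Finally, if you want a template that avoids property~$\mathcal{P}$, look at Proposition~\ref{diago} and Theorem~\ref{main}: there $[A,X]=0$ is obtained from $[A,[A,X]]=0$ using only that the $\lambda_i-\lambda_j$ are non-zero-divisors, with no reducedness hypothesis. Transporting that argument to a generic $A$---in particular, checking that the $\lambda_i-\lambda_j$ remain non-zero-divisors in the universal splitting algebra of $\chi_A$ over an arbitrary base $R$---is where the real difficulty lies, and is presumably why the statement is still a conjecture.
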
		
	\begin{rem}
	$i)$ The instance $R=\mathbb{Z}/27\mathbb{Z},U=3I_2$ shows that if $U$ is a nilpotent $n\times n$ matrix, then we have not necessarily $U^n=0$.\\
	$ii)$ In the previous conjecture, note that the exponent $(\alpha-1)n+1$ is very special ; indeed, if $U^{\alpha}=0_n$, then we have 
	$$(trace(U))^{(\alpha-1)n+1}=0,$$
	and we cannot do better (cf. \cite{11}).		
		\end{rem}
					Let $R$ be a commutative ring with unit and $A\in M_n(R)$.
		We look for the $nilpotent$ solutions $X\in M_n(R)$ of Eq (\ref{equaring}),			
		where $\alpha\geq 2$ and $g$ is a polynomial in $X$, with coefficients in $R$, such that $g(0)\not= 0$. Then, according to Eq (\ref{rec}), for every $i$,
		$$X^iA-AX^i=iX^{\alpha+i-1}g(X).$$
		Let $\mathrm{val}(T)$ denote the valuation of the polynomial $T$, with the following convention: $\mathrm{val}(0)=+\infty$. In the sequel, $X$ is a nilpotent solution of Eq (\ref{equaring}).
		\begin{lem}  \label{val1}
		Let $u$ be a polynomial in $X$. Then, for every $l$, $uA^l=\sum_{i=0}^l A^iv_i$ where $v_i$ is a polynomial in $X$ with, for every $i$, $\mathrm{val}(v_i)\geq val(u)$.		
		\end{lem}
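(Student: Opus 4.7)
The plan is to prove the lemma by induction on $l$, the key observation being that moving $A$ past a power of $X$ costs a factor of $X^{\alpha-1}g(X)$, which can only \emph{increase} the valuation.

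For the base case $l=0$, the identity $uA^0 = u$ gives $v_0=u$ with $\mathrm{val}(v_0)=\mathrm{val}(u)$. For the inductive step, I first establish a one-step commutation lemma: for any polynomial $w=\sum_k c_k X^k$ in $X$, we have
\begin{equation*}
wA = Aw + w',\qquad w' := \sum_k c_k\,k\,X^{\alpha+k-1}g(X),
\end{equation*}
where the formula for $w'$ comes directly from Eq (\ref{rec}) applied term by term, i.e.\ $X^kA = AX^k + kX^{\alpha+k-1}g(X)$. Since each monomial $c_k X^k$ of $w$ with $c_k \neq 0$ contributes a term of valuation $\alpha + k - 1 \geq k$ (using $\alpha\geq 2$), we get $\mathrm{val}(w') \geq \mathrm{val}(w)+\alpha-1 \geq \mathrm{val}(w)$.

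Now assume $uA^l = \sum_{i=0}^l A^i v_i$ with $\mathrm{val}(v_i)\geq\mathrm{val}(u)$. Multiply on the right by $A$ and apply the commutation lemma to each $v_i$, getting $v_iA = Av_i + v_i'$ with $\mathrm{val}(v_i')\geq \mathrm{val}(v_i)\geq \mathrm{val}(u)$. Expanding and regrouping,
\begin{equation*}
uA^{l+1} = \sum_{i=0}^l A^i(Av_i + v_i') = A^{l+1}v_l + \sum_{i=1}^l A^i(v_{i-1}+v_i') + v_0',
\end{equation*}
so setting $w_{l+1}=v_l$, $w_0 = v_0'$, and $w_i = v_{i-1}+v_i'$ for $1\leq i\leq l$, each $w_i$ is a polynomial in $X$ with $\mathrm{val}(w_i)\geq \mathrm{val}(u)$, closing the induction.

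There is really no obstacle here beyond careful bookkeeping; the content lies entirely in the identity $X^kA = AX^k + kX^{\alpha+k-1}g(X)$ from Eq (\ref{rec}), which guarantees that each commutator adjustment is absorbed into an $X$-polynomial of valuation at least $\mathrm{val}(u) + \alpha - 1$. The only subtlety worth flagging is that one uses the polynomial-in-$X$ structure (not a general matrix) so that the commutation rule applies monomial by monomial and preserves the ``polynomial in $X$'' form of each $v_i$.
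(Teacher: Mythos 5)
Your proof is correct and takes essentially the same route as the paper: both rest on the one-step commutation $X^kA=AX^k+kX^{\alpha+k-1}g(X)$ from Eq (\ref{rec}), note that the correction term has valuation at least $\mathrm{val}(w)+\alpha-1\geq\mathrm{val}(w)$, and iterate over powers of $A$. Your write-up merely makes the induction on $l$ and the regrouping of the $A^i$-coefficients more explicit than the paper's ``and so on.''
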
 
		\begin{proof}
		For every $i$, $X^iA=AX^i+v_0(X)$ with $\mathrm{val}(v_0)\geq i$. Then $uA=Au+v_1(X)$ with $\mathrm{val}(v_1)\geq \mathrm{val}(u)$. In the same way, $uA^2=A^2u+2Av_1+v_2$ and $uA^3=A^3u+3A^2v_1+3Av_2+v_3$ with $\mathrm{val}(v_3)\geq \mathrm{val}(v_2)\geq\mathrm{val}(v_1)$, and so on.
		\end{proof}
		\begin{lem}   \label{val2}
		Let $k\geq 2$ and $(v_i)_i$ be polynomials in $X$ with, for every $i$, $\mathrm{val}(v_i)\geq \alpha$. Then 
		$$(\sum_iA^iv_i)^k\text{ is in the form } \sum_iA^iw_i$$
		$$\text{ where, for every }i,w_i\text{ is a polynomial in }X\text{ with }\mathrm{val}(w_i)\geq k\alpha.$$		
		\end{lem}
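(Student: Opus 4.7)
The plan is to prove the statement by induction on $k$, with Lemma~\ref{val1} doing all the heavy lifting at each step. The key observation is that Lemma~\ref{val1} allows us to ``push'' any polynomial in $X$ from the right of a power $A^l$ to the left, at the cost of replacing it by a (finite) sum $\sum_{i\leq l} A^i v_i$ whose coefficients have valuation no smaller than the original.

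For the base case $k=2$, I would expand
\[
\Bigl(\sum_i A^i v_i\Bigr)^{2}=\sum_{i,j} A^i v_i A^j v_j,
\]
and apply Lemma~\ref{val1} to each factor $v_i A^j$ to write it as $\sum_{l\leq j} A^l \widetilde v_l$ with $\mathrm{val}(\widetilde v_l)\geq \mathrm{val}(v_i)\geq \alpha$. After regrouping, each resulting summand has the form $A^{i+l}\widetilde v_l v_j$, and the polynomial coefficient $\widetilde v_l v_j$ has valuation at least $\alpha+\alpha=2\alpha$, since valuation is additive (or super-additive) under multiplication of polynomials in the single variable $X$. Gathering terms by the power of $A$ produces the desired form.

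For the inductive step, assume the statement holds for some $k\geq 2$, so that $(\sum_i A^i v_i)^{k}=\sum_i A^i w_i$ with $\mathrm{val}(w_i)\geq k\alpha$. Multiply on the right by $\sum_j A^j v_j$ and apply exactly the same maneuver: use Lemma~\ref{val1} to replace each $w_i A^j$ by $\sum_{l\leq j} A^l \widetilde w_l$ with $\mathrm{val}(\widetilde w_l)\geq \mathrm{val}(w_i)\geq k\alpha$, then collect. The new polynomial coefficients are of the form $\widetilde w_l v_j$ with valuation at least $k\alpha+\alpha=(k+1)\alpha$, which is exactly what is needed.

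I do not anticipate a real obstacle here: the argument is essentially a bookkeeping exercise combining Lemma~\ref{val1} with the fact that $\mathrm{val}(pq)\geq \mathrm{val}(p)+\mathrm{val}(q)$ for polynomials in $X$ over a commutative ring. The only thing worth being careful about is that each invocation of Lemma~\ref{val1} produces only finitely many terms, and that the sums involved are finite (which is automatic since $\sum_i A^i v_i$ is a finite sum to begin with), so there is no convergence or indexing subtlety.
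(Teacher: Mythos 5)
Your proof is correct and follows essentially the same route as the paper: induction on $k$, using Lemma~\ref{val1} to push the polynomial coefficients in $X$ past the powers of $A$, and the super-additivity $\mathrm{val}(pq)\geq \mathrm{val}(p)+\mathrm{val}(q)$ to accumulate the valuation bound. The only cosmetic difference is that you spell out the base case $k=2$, while the paper starts the recurrence from the trivial case $k=1$.
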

		\begin{proof}
		We use a reasoning  by recurrence. Let $E=(\sum_iA^iv_i)^k=(\sum_iA^iz_i)(\sum_iA^iv_i)$ where $z_i$ is a polynomial in $X$ such that $\mathrm{val}(z_i)\geq (k-1)\alpha$. Using Lemma \ref{val1}, $E=\sum_{i,j}A^i(z_iA^j)v_j=\sum A^iA^r y_r v_j$ where $\mathrm{val}(y_r)\geq (k-1)\alpha$.		
		\end{proof}
		\begin{lem} \label{val3}                                                            
		Let $P,Q$ be polynomials in $X,A$. Then, for every $k$,
		$$(P(AX-XA)Q)^k\text{ is in the form }\sum_iA^iv_i$$
		$$\text{ where, for every }i,v_i\text{ is a polynomial in }X\text{such that }\mathrm{val}(v_i)\geq k\alpha.$$		
		\end{lem}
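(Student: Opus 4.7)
The plan is to establish the base case $k=1$ first and then invoke Lemma \ref{val2}. Since $X$ is assumed to be a solution of Eq (\ref{equaring}), we have $AX-XA = -X^{\alpha}g(X)$, so the middle factor of $P(AX-XA)Q$ is already a polynomial in $X$ alone, with valuation at least $\alpha$. Thus everything reduces to showing that an expression of the form $P\,u(X)\,Q$, where $u(X)$ is a polynomial in $X$ with $\mathrm{val}(u)\geq \alpha$ and $P,Q$ are arbitrary polynomials in $A,X$, can be rewritten as $\sum_i A^i v_i(X)$ with $\mathrm{val}(v_i)\geq \alpha$.

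To show this, I would expand $P$ and $Q$ as $R$-linear combinations of monomials in $A$ and $X$, and handle each monomial separately by linearity. For a single monomial of the form $A^{a_1}X^{b_1}\cdots A^{a_r}X^{b_r}\cdot u(X)\cdot A^{c_1}X^{d_1}\cdots A^{c_s}X^{d_s}$, I would repeatedly apply Lemma \ref{val1} to move every occurrence of $A$ to the left of every $X$-factor. Each application of Lemma \ref{val1} replaces a subexpression $u'A^{l}$ by $\sum_i A^i v_i$ without lowering the $X$-valuation; therefore after exhaustive reduction the monomial becomes $\sum_i A^i v_i(X)$, where each $v_i$ is a product (in some order) of the $X^{b_j}$'s, $u(X)$, and the $X^{d_j}$'s intertwined with correction terms of no smaller $X$-valuation. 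In particular, every such $v_i$ has valuation at least $\mathrm{val}(u)\geq \alpha$, since the factor $u(X)$ contributes at least $\alpha$ to the total $X$-valuation of every term produced. Summing over all monomials of $P$ and $Q$ yields $P(AX-XA)Q=\sum_i A^i v_i$ with $\mathrm{val}(v_i)\geq \alpha$, settling the case $k=1$.

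For arbitrary $k\geq 1$, I would apply Lemma \ref{val2} directly to the decomposition just obtained: it says precisely that raising $\sum_i A^i v_i$ (with $\mathrm{val}(v_i)\geq \alpha$) to the $k$-th power yields $\sum_i A^i w_i$ with $\mathrm{val}(w_i)\geq k\alpha$, which is the stated conclusion.

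The routine bookkeeping of pushing $A$'s to the left through a mixed monomial is the only delicate point, and it is already encapsulated in Lemma \ref{val1}; the key conceptual observation is simply that the $X^{\alpha}g(X)$ coming from the Lie bracket $AX-XA$ is responsible for the valuation bound, and that all subsequent rearrangements preserve or increase the valuation. Once the base case is in the required normal form, Lemma \ref{val2} does all remaining work, so no real obstacle arises beyond this reduction.
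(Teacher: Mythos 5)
Your proof is correct and follows essentially the same route as the paper's: substitute $AX-XA=-X^{\alpha}g(X)$, use Lemma \ref{val1} to push the powers of $A$ to the left of each monomial while preserving the $X$-valuation contributed by the factor of valuation $\geq\alpha$, and then invoke Lemma \ref{val2} for the $k$-th power. The only (harmless) difference is one of ordering: you normalize the $k=1$ expression first and then apply Lemma \ref{val2} literally to its $k$-th power, whereas the paper expands the $k$-th power first and normalizes each of the $k$ factors before appealing to (the product version of) Lemma \ref{val2}.
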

		\begin{proof}
		Let $E=(P(AX-XA)Q)^k=(PX^{\alpha}gQ)^k$. Then $E=\sum\Pi_{i=1}^k(P_iX^{\alpha}gQ_i)$ where, for every $i$, $P_i,Q_i$ are monomials in  the form $A^{i_1}X^{j_1}A^{i_2}X^{j_2}\cdots$. Assume, for instance, that $P_i=A^{i_1}X^{j_1}A^{i_2}X^{j_2}A^{i_3}X^{j_3}$. By Lemma \ref{val1}, $$P_i=A^{i_1}X^{j_1}A^{i_2}(\sum_rA^ru_r)X^{j_3}=\sum A^{i_1}X^{j_1}A^{i_2+r}u_rX^{j_3}=\sum\sum A^{i_1}A^sw_su_sX^{j_3}$$
		and finally $P_i,Q_i$ are in the form $\sum_jA^jp_j,\sum_jA^jq_j$ where $p_j,q_j$ are polynomials in $X$. Now 
		$$E=\sum\Pi_{i=1}^k(\sum_jA^jp_j)X^{\alpha}g(\sum_jA^jq_j)=$$
		$$\sum(A^{i_1}p_1X^{\alpha}gA^{j_1}q_1)\cdots (A^{i_k}p_kX^{\alpha}gA^{j_k}q_k)=\sum(A^{i_1}\tilde{v}_1)\cdots(A^{i_k}\tilde{v}_k),$$
		  where, for every $i$, $\mathrm{val}(\tilde{v}_i)\geq \alpha$. Using Lemma \ref{val2}, $E=\sum_iA^iv_i$ where, for every $i$, $\mathrm{val}(v_i)\geq k\alpha$.		
		\end{proof}
				We deduce \textbf{Proposition \ref{triang}}
		\begin{proof}
		Use Lemma \ref{val2}, Lemma \ref{val3} and the fact that $X$ is a nilpotent matrix.
		\end{proof}
		\begin{rem}
		$i)$ When $R$ is an arbitrary algebraically closed field, the previous result is equivalent to: $A$ and $X$ are $ST$ over $R$ (cf. \cite{16} and compare with Theorem \ref{lie}).\\ 
		$ii)$ When $R$ is a ring, McCoy, in \cite{15}, gave an equivalent condition that, unfortunately, seems almost useless. In fact,  if $AB=BA$ and $A,B$ are triangularizable over $R$, then they have not necessarily a common eigenvector ; the following example, for $n=2$, is due to J. Starr: 
		$$R=\mathbb{C}[\epsilon]/\langle{\epsilon}^2\rangle ,A=\begin{pmatrix}0&\epsilon\\0&0\end{pmatrix},B=\begin{pmatrix}0&0\\\epsilon&0\end{pmatrix}.$$  		
		\end{rem}
		Note that we can reduce (theoretically) the resolution of Eq (\ref{relat}) to the case $\alpha=2$. Indeed, let $(A,B)$ be satisfying Eq (\ref{relat}). According to Eq (\ref{rec}), $A^{\alpha-1}B-BA^{\alpha-1}=(\alpha-1)A^{2\alpha-2}$. Put $B=(\alpha-1)B_1, A_1=A^{\alpha-1}$ ; if $\alpha-1$ is not a zero-divisor, then $A_1B_1-B_1A_1={A_1}^2$.\\		
	\indent	We have a more precise result when $A$ is diagonalizable and its spectrum is ``good''. 
		\begin{lem}   \label{commut}
		 Let $A\in M_n(R)$ be similar to $\mathrm{diag}(\lambda_1,\cdots,\lambda_n)$ and $B\in M_n(R)$ such that $AB=BA$. \\
		$i)$ Assume that, for every $i\not= j$, $\lambda_i-\lambda_j$ is not a zero-divisor in $R$. Then $A$ and $B$ are simultaneously diagonalizable.\\
		$ii)$ Assume that, for every $i\not= j$, $\lambda_i-\lambda_j$ is a unit. Then $B$ is a polynomial in $A$ of degree at most $n-1$ and with coefficients in $R$.
				\end{lem}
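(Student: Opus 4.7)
The plan is to use the change of basis that diagonalizes $A$ to reduce both parts to a calculation with explicitly diagonal matrices. Fix $Q\in GL_n(R)$ with $Q^{-1}AQ = D := \mathrm{diag}(\lambda_1,\ldots,\lambda_n)$ and set $B' = Q^{-1}BQ$; then $AB=BA$ becomes $DB' = B'D$. Writing $B' = [b_{ij}]$ entrywise, this amounts to $(\lambda_i - \lambda_j)b_{ij} = 0$ for all $i,j$.

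For part $(i)$, the hypothesis that $\lambda_i-\lambda_j$ is a non-zero-divisor for $i\neq j$ forces $b_{ij}=0$ off the diagonal, so $B'$ is diagonal and hence $A,B$ are simultaneously diagonalized by $Q$.

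For part $(ii)$, from the previous step write $B' = \mathrm{diag}(\mu_1,\ldots,\mu_n)$ and introduce the Lagrange interpolation polynomial
\[
P(T) = \sum_{i=1}^n \mu_i \prod_{j\neq i} \frac{T-\lambda_j}{\lambda_i-\lambda_j} \in R[T].
\]
The coefficients lie in $R$ precisely because each $\lambda_i-\lambda_j$ with $i\neq j$ is now assumed invertible in $R$; by construction $\deg P\le n-1$ and $P(\lambda_i)=\mu_i$ for every $i$, so $P(D)=B'$. Conjugating back yields $P(A) = QP(D)Q^{-1} = QB'Q^{-1} = B$, finishing part $(ii)$.

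There is no serious obstacle here. The only points that need care are: first, that conjugation by $Q\in GL_n(R)$ preserves both the commutation relation and polynomial evaluation, legitimising the passage to and from the diagonal picture; and second, that because $R$ need not be a field, the divisions by $\lambda_i-\lambda_j$ used in the Lagrange formula must genuinely take place inside $R$, which is exactly why part $(ii)$ strengthens the hypothesis from non-zero-divisor to unit.
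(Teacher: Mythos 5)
Your proof is correct and follows essentially the same route as the paper: reduce to $A$ diagonal, read off $(\lambda_i-\lambda_j)b_{ij}=0$ entrywise for $(i)$, and then interpolate the diagonal of $B$ at the $\lambda_i$ for $(ii)$. The only cosmetic difference is that you write the interpolating polynomial explicitly via the Lagrange formula, while the paper solves the equivalent Vandermonde linear system by invoking that its determinant $\prod_{i<j}(\lambda_j-\lambda_i)$ is a unit.
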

				\begin{proof}
	 We may assume that $A=\mathrm{diag}(\lambda_1,\cdots,\lambda_n)$. \\
	$i)$ If $B=[b_{i,j}]$, then $AB-BA=[c_{i,j}]$ with $c_{i,j}=(\lambda_i-\lambda_j)b_{i,j}$ ; if $i\not= j$, then $b_{i,j}=0$.\\
	$ii)$ According to $i)$, we may assume that $B=\mathrm{diag}(\mu_1,\cdots,\mu_n)$. We must solve the linear system, in the unknowns $(\alpha_i)_{0\leq i<n}$:
	$$\text{ for every }j,\;\mu_j=\sum_i \alpha_i{\lambda_j}^i.$$
	Since the determinant of the associated Vandermonde matrix is a unit, we are done. 
				\end{proof}
				\begin{prop}  \label{diago}
			Let $A\in M_n(R)$ be similar to $\mathrm{diag}(\lambda_1,\cdots,\lambda_n)$ such that, for every $i\not= j$, $\lambda_i-\lambda_j$ is not a zero-divisor and $B\in M_n(R)$. If $A$ and $[A,B]$ commute, then $AB=BA$.					
			\end{prop}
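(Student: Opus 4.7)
The plan is to reduce to the case where $A$ is already diagonal and then compute $[A,[A,B]]$ entry by entry. Since $A$ is similar over $R$ to $D=\mathrm{diag}(\lambda_1,\ldots,\lambda_n)$, there is $P\in GL_n(R)$ with $A=PDP^{-1}$; replacing $B$ by $P^{-1}BP$, the hypothesis and conclusion are unchanged, so I may assume $A=D$.

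Write $B=[b_{i,j}]$. A direct computation gives $[A,B]_{i,j}=(\lambda_i-\lambda_j)b_{i,j}$, and then applying the bracket again yields
\[
\bigl[A,[A,B]\bigr]_{i,j}=(\lambda_i-\lambda_j)^2 b_{i,j}.
\]
The hypothesis $A[A,B]=[A,B]A$ means that this matrix is zero, so $(\lambda_i-\lambda_j)^2 b_{i,j}=0$ for every $i,j$.

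Now I invoke the standard fact that in a commutative ring with unity the product of two non-zero-divisors is a non-zero-divisor: for each $i\neq j$, $\lambda_i-\lambda_j$ is by assumption a non-zero-divisor, hence so is $(\lambda_i-\lambda_j)^2$. Therefore $b_{i,j}=0$ for all $i\neq j$, i.e.\ $B$ is diagonal. Since diagonal matrices commute with $A=D$, we conclude $AB=BA$.

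There is essentially no obstacle here; the only point worth verifying carefully is that the reduction $A\mapsto P^{-1}AP$ is legitimate over $R$ (which is immediate from $P\in GL_n(R)$) and the closure of non-zero-divisors under multiplication. The role of the hypothesis on the $\lambda_i-\lambda_j$ is exactly to allow us to ``divide'' out $(\lambda_i-\lambda_j)^2$ in the off-diagonal entries, which is what makes the argument succeed over a general ring rather than only over a field or a domain.
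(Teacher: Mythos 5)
Your proof is correct and is essentially identical to the paper's: both reduce to $A$ diagonal, compute $[A,[A,B]]_{i,j}=(\lambda_i-\lambda_j)^2b_{i,j}=0$, and conclude $b_{i,j}=0$ for $i\neq j$ from the non-zero-divisor hypothesis. Your extra remark that a square of a non-zero-divisor is a non-zero-divisor is the (implicit) step the paper uses as well.
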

			\begin{proof}
			We may assume that $A=\mathrm{diag}(\lambda_1,\cdots,\lambda_n)$ and put $B=[b_{i,j}]$, $D=[d_{i,j}]=A[A,B]-[A,B]A$. We obtain $d_{i,j}=(\lambda_i-\lambda_j)^2b_{i,j}=0$ ; therefore, if $i\not= j$, then $b_{i,j}=0$.			
			\end{proof}
		Now we prove 	\textbf{Theorem \ref{main}}				
				\begin{proof}  
					 According to Proposition \ref{domain}, $X$ is nilpotent.	Assume that the nilindex of $X$ is $i+\alpha-1$. According to Eq (\ref{rec}), $X^i$ and $A$ commute. Since $X^{i-\alpha+1}A-AX^{i-\alpha+1}=(i-\alpha+1)X^i$, $[X^{i-(\alpha-1)},A]$ and $A$ commute.	
				According to Proposition \ref{diago}, $[X^{i-(\alpha-1)},A]=0$. Then we obtain a finite sequence of matrices
			$$(X^{i-k(\alpha-1)})_{0 \leq k< i/(\alpha-1)}$$
			that commute with $A$ and where, at each step, the exponent decreases by $\alpha-1$. Finally we obtain a matrix $X^{\beta}$, with $\beta\in\llbracket 1,\alpha-1\rrbracket$, that commutes with $A$. Since $X^{\beta+\alpha-1}=0$, $[X^{\alpha},A]=\alpha X^{\beta+\alpha-1}X^{\alpha-\beta}=0$. Finally $[X,A]$ and $A$ commute, that implies $[A,X]=0$.	
						\end{proof}
	We show	\textbf{Theorem \ref{princ}}, our main result.					
	\begin{proof} We may assume that $A=\mathrm{diag}(\lambda_1I_{n_1},\cdots,\lambda_rI_{n_r})=\mathrm{diag}(\mu_1,\cdots,\mu_n)$.\\ 
$i)$ $\bullet$ For every $k\geq 1$, $\ker((A-\lambda_1I)^k)=[e_1,\cdots,e_{n_1}]$ where $(e_i)_i$ is the canonical basis of $R$. Indeed, if $x=[x_1,\cdots,x_n]^T$ and $(A-\lambda_1 I)^k x=0$, then, for every $i$, $(\mu_i -\lambda_1)^kx_i=0$. Thus, for every $i>n_1$, $x_i=0$.\\
$\bullet$ $X(\ker(A-\lambda_1I))\subset \ker(A-\lambda_1I)$. Indeed, this comes from the proofs of \cite[Lemmas 1 and 2, Theorem 3]{5}. Therefore $AX=XA$.\\
$\bullet$ In the same way than in the proof of Lemma \ref{commut} i), we obtain that $X$ has the required form.\\
$ii)$  In $M_n(R)$, $g(0)I_n$ is a unit and $g(X)-g(0)I_n$ is nilpotent ; therefore $g(X)$ is a unit and we are done.
\end{proof}			
				\begin{rem}	In the previous proposition, consider a matrix $X_i$. According to \cite[Theorem 8.54]{18}, $\det(g(X_i))$ is the resultant $\mathrm{Res}(\chi_{X_i},g)$. Therefore, if $\mathrm{Res}(\chi_{X_i},g)$ is a unit, then ${X_i}^{\alpha}$ is zero again ; in general, it is not, as we see in the following instance: let $\tau\in R$ such that $\tau^2\not=0,\tau^3=0$, $X_i=\tau I_2, \;g(x)=\tau$ and $\alpha=2$. Then ${X_i}^2\not=0,\,{X_i}^3=0$ and ${X_i}^2g(X_i)=0$.
				\end{rem}

 \textbf{Acknowledgements}. The author thanks J. Bra\v{c}i\v{c} for helpful discussions.

\bibliographystyle{plain}

\end{document}